\newcommand{\dd}{\mathrm{d}}
\newcommand{\Met}{\mathrm{Met}}
\newcommand{\scal}{\mathrm{scal}}
\newcommand{\Ric}{\mathrm{Ric}}
\newcommand{\R}{\mathds{R}}
\newcommand{\Z}{\mathds{Z}}
\newcommand{\C}{\mathds{C}}
\newcommand{\Hr}{\mathds{H}}
\newcommand{\N}{\mathds{N}}
\newcommand{\vol}{\mathrm{vol}}
\newcommand{\Vol}{\mathrm{Vol}}
\newcommand{\Ad}{\mathrm{Ad}}
\newcommand{\spec}{\mathrm{Spec}}
\newcommand{\grad}{\operatorname{grad}}
\newcommand{\Sp}{\mathrm{Sp}}
\newcommand{\SU}{\mathrm{SU}}
\newcommand{\U}{\mathrm{U}}
\newcommand{\SO}{\mathrm{SO}}
\newcommand{\Spin}{\mathrm{Spin}}
\newcommand{\g}{\mathtt g}
\newcommand{\h}{\mathtt h}
\renewcommand{\k}{\mathtt k}
\newtheorem{theorem}{Theorem}[]
\newtheorem{proposition}[theorem]{Proposition}
\newtheorem{mainthm}{Theorem}
\theoremstyle{definition}
\theoremstyle{remark}
\newtheorem{remark}[theorem]{Remark}
\newtheorem{example}[theorem]{Example}
\title[Multiplicity of solutions to the Yamabe problem on submersions]{Multiplicity of solutions to the Yamabe problem on collapsing Riemannian submersions}
\author{Renato G. Bettiol \and Paolo Piccione}
\numberwithin{equation}{section}
\numberwithin{theorem}{section}
\address{\begin{tabular}{lll}
University of Notre Dame & &Universidade de S\~ao Paulo \\
Department of Mathematics & & Departamento de Matem\'atica \\
255 Hurley Building & & Rua do Mat\~ao, 1010 \\
Notre Dame, IN, 46556-4618, USA & & S\~ao Paulo, SP, 05508-090, Brazil\\
\emph{E-mail address}: {\tt rbettiol@nd.edu} & & \emph{E-mail address}: {\tt piccione@ime.usp.br}
\end{tabular}
}
\date{\today}
\thanks{The second named author is supported by Fapesp, S\~ao Paulo, Brazil, and by CNPq, Brazil. This paper was concluded during a visit of the first named author to the University of S\~ao Paulo, Brazil, in May 2012. We thankfully acknowledge the financial support for this visit from Fapesp, Process 2011/21362-2,  \emph{``Group actions, submanifold theory and global analysis in Riemannian and pseudo-Riemannian geometry''}.}
\subjclass[2010]{Primary: 58J55, 53C30; Secondary: 53A30, 53C20, 53C21, 58E50, 58J50}
\begin{document}
\begin{abstract}
Let $g_t$ be a family of constant scalar curvature metrics on the total space of a Riemannian submersion obtained by shrinking the fibers of an original metric $g$, so that the submersion \emph{collapses} as $t\to0$ (i.e., the total space converges to the base in the Gromov-Hausdorff sense). We prove that, under certain conditions, there are at least $3$ unit volume constant scalar curvature metrics in the conformal class $[g_t]$ for infinitely many $t$'s accumulating at $0$. This holds, e.g., for homogeneous metrics $g_t$ obtained via Cheeger deformation of homogeneous fibrations with fibers of positive scalar curvature.
\end{abstract}

\maketitle
\vspace{-0.5cm}

\section{Introduction}

A classic problem in Riemannian geometry is to find possible \emph{canonical} metrics on a given smooth manifold $M$. Along this quest, an important achievement was the complete solution of the celebrated Yamabe problem, which states that given a closed Riemannian manifold $(M,g_0)$, with $\dim M\geq 3$, there exists a constant scalar curvature metric $g$ conformal to $g_0$. Up to a normalization, such a metric $g$ can be characterized variationally as a critical point of the Hilbert-Einstein functional
\begin{equation}\label{eq:A}
\mathcal A(g)=\frac{1}{\Vol(g)}\int_M \scal(g)\; \vol_g,
\end{equation}
restricted to the set $[g_0]_1$ of unit volume metrics in the conformal class of $g_0$. Existence of a metric that \emph{minimizes} this constrained functional, called the \emph{Yamabe metric}, is a consequence of the works of Yamabe~\cite{Yam60}, Trudinger~\cite{Tru68}, Aubin~\cite{Aub76} and Schoen~\cite{Schoen84}. In addition to the minimizer, there may be also other critical points; thus the solution \emph{may be not unique}. However, Anderson~\cite{And05} recently proved that, on generic conformal classes, the Yamabe metric is the unique solution. In the present paper, we are interested in the complementary situation, i.e., finding conformal classes where the Yamabe problem has \emph{multiple} solutions. Our main results provide a large class of manifolds whose conformal class contains at least $3$ distinct solutions (see Theorems~\ref{thm:main} and \ref{thm:curv}).

A classic method to obtain new solutions of a PDE from a path of known solutions is to use \emph{Bifurcation Theory}. The basic setup for our framework consists of a given one-parameter family $g_t$ of known solutions to the Yamabe problem,
\begin{equation}\label{eq:critpts}
\dd\big(\mathcal A|_{[g_t]_1}\big)(\hat g_t)=0, \quad t\in [a,b],
\end{equation}
where $\hat g_t$ is the unit volume metric homothetic to $g_t$. We study the case where $g_t$ is obtained by shrinking the fibers of a Riemannian submersion with totally geodesic fibers. By proving that certain topological invariants (e.g., the Morse index) of $g_t$ change as $t$ crosses a value $t_*$, one obtains existence of \emph{new} solutions accumulating at $g_{t_*}$. Then, a simple trick (Proposition~\ref{prop:mult}) implies that for $t$'s close to $t_*$ there are at least 3 solutions to the Yamabe problem on $[g_t]_1$.

In our main result, $g_t$ are homogeneous metrics, hence trivially solutions to the Yamabe problem and good (i.e., non-generic) candidates for admitting other solutions in their conformal class. Let $H\subsetneq K\subsetneq G$ be compact connected Lie groups with $\dim K/H\geq2$, and assume that either $H$ is normal in $K$ or $K$ is normal in $G$. Consider the \emph{homogeneous fibration}
\begin{equation}\label{eq:homfib}
K/H\longrightarrow G/H\stackrel{\pi}{\longrightarrow} G/K, \quad \pi(gH)=gK.
\end{equation}
More precisely, $\pi$ is the associated bundle with fiber $K/H$ to the $K$-principal bundle $G\to G/H$. Endow the above spaces with \emph{compatible} homogeneous metrics (see Section \ref{sec:bifhom}). Shrinking the fibers of \eqref{eq:homfib} by a factor $t^2$, we get a family $g_t$ of homogeneous metrics on $G/H$, sometimes called \emph{canonical variation} of \eqref{eq:homfib}. This is, up to reparameterization, the \emph{Cheeger deformation} of $G/H$ in the direction of the natural $K$-action. As $t$ approaches $0$, the manifolds $(G/H,g_t)$ converge to the base $G/K$ in the Gromov-Hausdorff sense. We explore the existence of infinitely many bifurcations of \eqref{eq:critpts} along this collapse of $(G/H,g_t)$ onto the base to obtain the following multiplicity result.

\begin{mainthm}\label{thm:main}
Let $K/H\to G/H\to G/K$ be a homogeneous fibration endowed with compatible homogeneous metrics such that $K/H$ has positive scalar curvature. Let $g_t$ be the family of $G$-invariant metrics on $G/H$ obtained as described above. Then, there exists a subset $\mathcal T\subset\left]0,1\right[$, that accumulates at $0$, such that for each $t\in\mathcal T$ there are at least $2$ solutions to the Yamabe problem in $[g_t]_1$, other than $\hat g_t$, and they are not $G$-invariant.
\end{mainthm}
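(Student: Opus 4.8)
The plan is to realize the theorem as a consequence of a bifurcation analysis of the family of critical points $\hat g_t$ of the constrained Hilbert-Einstein functional, exploiting the collapse $t\to 0$ to produce infinitely many bifurcation instants. The starting point is the classical fact that, for a unit volume constant scalar curvature metric $\hat g_t$, the second variation of $\mathcal A|_{[\hat g_t]_1}$ is governed by the Jacobi operator, whose relevant spectral data is controlled by the Laplacian $\Delta_{g_t}$ acting on functions with zero average, compared against the (constant) scalar curvature $\scal(g_t)$; more precisely the metric $\hat g_t$ is nondegenerate as a critical point precisely when $\tfrac{\scal(g_t)}{\dim M-1}$ is not an eigenvalue of $\Delta_{g_t}$, and the Morse index counts eigenvalues below that threshold. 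I would therefore first reduce the problem to understanding the asymptotic behavior, as $t\to 0$, of the spectrum of $\Delta_{g_t}$ on $(G/H,g_t)$ together with the scalar curvature $\scal(g_t)$.

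The key computation is then the collapse asymptotics. As $t\to 0$ the fibers $K/H$ shrink by $t^2$, so the scalar curvature of $g_t$ behaves like $\scal(g_t)=\tfrac{1}{t^2}\scal(K/H)+O(1)$, which blows up to $+\infty$ precisely because $K/H$ has positive scalar curvature; this is where the fiber positivity hypothesis enters in an essential way. Meanwhile, since $(G/H,g_t)$ Gromov-Hausdorff converges to the base $G/K$, the low eigenvalues of $\Delta_{g_t}$ on functions pulled back from the base converge to eigenvalues of $\Delta$ on $G/K$ and stay bounded, while the eigenvalues associated to genuinely fiberwise modes scale like $t^{-2}$. The upshot is that the threshold $\tfrac{\scal(g_t)}{\dim M-1}\sim \tfrac{c}{t^2}$ sweeps upward through more and more of the bounded base eigenvalues as $t\to 0$, so the Morse index of $\hat g_t$ is a nondecreasing step function of $1/t$ that tends to $+\infty$. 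Each jump occurs at a value $t_*$ where the threshold crosses an eigenvalue; at such a $t_*$ the critical point $\hat g_{t_*}$ is degenerate and the Morse index changes, which by the standard variational bifurcation criterion (change of Morse index across a degenerate instant forces bifurcation) yields a sequence $\mathcal T$ of bifurcation instants accumulating at $0$.

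At each $t\in\mathcal T$ the bifurcating branch produces a constant scalar curvature metric in $[g_t]_1$ distinct from $\hat g_t$; combining this with the reflection trick of Proposition~\ref{prop:mult}, which manufactures a second new solution from the first, gives the claimed lower bound of $2$ solutions beyond $\hat g_t$. Finally, to see that the new solutions are not $G$-invariant, I would argue that the bifurcating eigenfunctions live in the eigenspaces responsible for the index jump, and these correspond to base eigenvalues of $\Delta$ on $G/K$ realized by nonconstant functions; since the only $G$-invariant metrics in the conformal class correspond to constant conformal factors, any solution built from a nonconstant conformal factor in these eigenspaces cannot be $G$-invariant. The main obstacle I anticipate is the spectral bookkeeping at the crossings: to invoke the Morse-index-change bifurcation theorem cleanly one must ensure that at each $t_*$ the eigenvalue crossing is transversal (the eigenvalue branch crosses the moving threshold with nonzero relative speed) and that no accidental higher-dimensional degeneracies stall the index jump; establishing that the two monotone quantities $\tfrac{\scal(g_t)}{\dim M-1}$ and the fixed base eigenvalues actually cross — rather than merely touch — and controlling the $O(1)$ corrections to the scalar curvature and the eigenvalue perturbations under collapse is the technical heart of the argument.
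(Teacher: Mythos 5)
Your setup and collapse asymptotics are correct, and you have correctly located the crux — but you have not resolved it, and without a resolution the argument does not close. The gap is the \emph{compensation problem}. You assert that the Morse index of $\hat g_t$ is a nondecreasing step function that jumps each time the threshold $\scal(g_t)/(m-1)\sim \scal(K/H)/((m-1)t^2)$ sweeps past a base eigenvalue, and then invoke the Morse-index-change bifurcation criterion. But the non-constant eigenvalues of $\Delta_t$ have the form $\lambda^{k,j}(t)=\mu_k+(t^{-2}-1)\phi_j$ with $\phi_j>0$ (see \eqref{eq:lambdakj}), so they blow up at exactly the same rate $t^{-2}$ as the threshold itself. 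Depending on whether $\phi_j$ is larger or smaller than $\scal(K/H)/(m-1)$, and on the $O(1)$ corrections, such an eigenvalue may cross the threshold \emph{downward} at the very instant a constant (base) eigenvalue is crossed upward, with matching multiplicities, leaving the Morse index unchanged. You flag this as ``the technical heart'' but propose no mechanism to exclude it, and in general it cannot be excluded by spectral estimates alone; this is precisely why the paper does not use Proposition~\ref{prop:bifmorseindex} here. The paper's resolution is the equivariant criterion of Smoller--Wasserman (Proposition~\ref{prop:equivbif}): one tracks not the Morse index but the negative isotropic representation of $K$, and observes that its number of trivial summands equals the number of $K$-invariant eigenfunctions below the threshold, which by Proposition~\ref{prop:eigenlift} are exactly the lifts of base eigenfunctions. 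A compensating non-constant eigenspace contains no $K$-fixed vectors, so this count strictly changes across each $t_q$ regardless of compensation, forcing the representations before and after to be non-isomorphic and hence forcing bifurcation. Your proposal contains no equivariant ingredient, so this essential step is missing.

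Two smaller points. First, Proposition~\ref{prop:mult} is not a ``reflection trick'': the third solution is the Yamabe \emph{minimizer} (which exists by Aubin--Schoen), and it is distinct from both $\hat g_t$ and the bifurcating metric because both of those have positive Morse index and hence are not minima; you should also justify $N(g_t)>0$ for small $t$, which follows from the first crossing of a base eigenvalue. Second, your non-$G$-invariance argument reasons about the linearized bifurcation directions rather than the solutions themselves; the clean argument is simply that a conformal class contains at most one $G$-invariant metric up to rescaling, namely $\hat g_t$, so any other constant scalar curvature metric in $[g_t]_1$ is automatically not $G$-invariant.
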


Theorem~\ref{thm:main} applies, e.g., to short exact sequences $K\to G\to G/K$ of compact connected Lie groups and \emph{twisted product} fibrations $K/H\to ((K\times L)/\Gamma)/H \to G/K$, where $K/H$ is a compact homogenous space of positive scalar curvature and $G=(K\times L)/\Gamma$ is a connected compact Lie group. More interestingly, the result also holds, e.g., for any homogeneous fibrations \eqref{eq:homfib} where $H$ is normal in $K$ and the quotient $K/H$ is a non-abelian compact connected Lie group. In this case, $G$ can have arbitrarily large rank and dimension, and the corresponding possible total spaces can be much more general than twisted products. For more details and examples, see Section~\ref{sec:metrics}. We also stress that the hypotheses made on the scalar curvature (and dimension) of the fibers $K/H$ are necessary. A counter-example is given in the end of Section~\ref{sec:bifhom}.

Various other non-uniqueness phenomena have been studied in the literature, but usually this can only be achieved for very specific examples, see \cite{am,bm,bp,fm,lpz,Schoen91}. One exception is a remarkable result of Pollack~\cite{pollack}, that proved existence of arbitrarily $C^0$-small perturbations of any given metric, with arbitrarily large number of solutions in its conformal class. Previously, Schoen~\cite{Schoen91} had proven existence of an increasing number of solutions, with larger energy and Morse index, in the conformal class of the product $S^1(r)\times S^{m-1}$ of round spheres, as $r$ tends to infinity. Lima, Piccione and Zedda~\cite{lpz} generalized this result to families of product metrics on a product $M_1\times M_2$ of compact Riemannian manifolds given by rescaling one of the factors, obtaining \emph{bifurcation} of solutions. Inspired by these results, the authors recently obtained similar bifurcation results for families of homogeneous spheres in \cite{bp}. The core of Theorem~\ref{thm:main} is a further generalization, establishing that such bifurcations indeed occur on several other families of compact homogeneous spaces.

The initial approach to detect bifurcation in a variational problem, such as \eqref{eq:critpts}, is to look for a change in the Morse index. This is done identifying eigenvalues of the second variation \eqref{eq:jacobi} of $\mathcal A|_{[g_t]_1}$ that change sign for certain values of $t$. Nevertheless, a subtle \emph{compensation} problem may occur, when other eigenvalues with the same combined multiplicity cross zero in the opposite direction. On the one hand, it is not hard to detect passage through zero of certain explicitly computable parts of the spectrum. On the other hand, it is in general not possible to rule out that other eigenvalues also change sign at the same time, potentially producing this compensation that leaves the Morse index unchanged.

In the case of the Yamabe problem, the spectral analysis required consists of comparing eigenvalues of the Laplacian of Riemannian submersions with the scalar curvature. Since the fibers are assumed totally geodesic, eigenfunctions of the Laplacian of the base may be lifted to eigenfunctions of the total space that are constant along the fibers (see Section~\ref{sec:lapl}). This provides a subset of the spectrum easier to deal with, in the sense that a direct computation of the scalar curvature immediately gives infinitely many crossings through zero. Nevertheless, no information is available in general regarding possible compensation due to other crossings in the opposite direction.

The key to handle this situation is to use homogeneity, placing the problem in an equivariant context where a subtler bifurcation criterion due to Smoller and Wasserman~\cite{smwas} applies. Linearizing the action, one gets a representation of the symmetry group in each eigenspace of the second variation. The direct sum of those representations that correspond to negative eigenvalues is called \emph{negative isotropic representation}, see Subsection~\ref{sub:equivbif}. The equivariant criterion asserts that bifurcation occurs at the degeneracy values where the negative isotropic representation changes, which is the case of all degeneracy values mentioned above that correspond to crossings of eigenvalues of the base.
It is then a simple observation that such bifurcations yields the desired multiplicity result (see Proposition~\ref{prop:mult}).

Although homogeneity is strongly used in our main result, one can replace it by other hypotheses that make it possible to avoid the compensation problem. The general context is then a Riemannian submersion with totally geodesic fibers,
\begin{equation}\label{eq:submersion}
F\longrightarrow M\stackrel{\pi}{\longrightarrow} B,
\end{equation}
and metrics $g_t$ on $M$, obtained by shrinking the fibers. Unfortunately, in general, these alternative hypotheses are quite restrictive. Since avoiding this compensation is the central issue in our results, it is natural to expect that a deeper understanding of this issue would allow weaker hypotheses, see Remark~\ref{rem:improvements}. One possibility is to impose curvature conditions that imply certain lower bounds on eigenvalues of the Laplacian, in which case, bifurcation is obtained via the easier Morse index criterion.

\begin{mainthm}\label{thm:curv}
Let $F\to M\to B$ be a Riemannian submersion with totally geodesic fibers and $l=\dim F\geq 2$, $m=\dim M$. Assume the metrics $g_t$ obtained by shrinking the fibers have constant scalar curvature\footnote{For example, this happens if the original metric on $M$ is Einstein, see \cite[Cor 9.62]{besse}.} and that for some $\tau>0$ and $k_1,k_2>0$,
\begin{equation*}
\left\{\begin{matrix}
\Ric_F \!\!\!\!& \geq &\!\!\!\! (l-1)\,k_1 \\
\scal_F \!\!\!\!&<&\!\!\!\! l(m-1)\,k_1
\end{matrix} \right.
\quad\quad\mbox{and}\quad\quad
\left\{\begin{matrix}
\Ric_{(M,g_{\tau})} \!\!\!\!&\geq & \!\!\!\!(m-1)\,k_2 \\
\scal_B \!\!\!\!&\leq &\!\!\!\! m(m-1)\,k_2.
\end{matrix} \right.
\end{equation*}
Then, there exists an infinite subset $\mathcal T$ of positive real numbers, that accumulates at $0$, such that for each $t\in\mathcal T$ there are at least $3$ solutions to the Yamabe problem in the conformal class $[g_t]$.
\end{mainthm}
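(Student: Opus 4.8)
The plan is to detect \emph{bifurcation} of the family $\hat g_t$ of unit-volume constant scalar curvature metrics as $t\to0$, and then to invoke Proposition~\ref{prop:mult} to convert each bifurcation instant into the desired multiplicity statement. First I would recall the second-variation characterization \eqref{eq:jacobi}: at a unit-volume constant scalar curvature metric, the Hessian of $\mathcal A|_{[g_t]_1}$, restricted to functions of zero $g_t$-average, is a positive multiple of the Jacobi-type operator $(m-1)\Delta_{g_t}-\scal(g_t)$. Consequently $\hat g_t$ is degenerate exactly when $\scal(g_t)/(m-1)\in\spec(\Delta_{g_t})$, and its Morse index equals the number of eigenvalues $\lambda>0$ of $\Delta_{g_t}$ lying below the threshold $\Theta(t):=\scal(g_t)/(m-1)$, counted with multiplicity. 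Bifurcation will then follow from the classical change-of-Morse-index criterion once I exhibit infinitely many values of $t$, accumulating at $0$, at which this index jumps.

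Next I would produce the crossings. Since the fibers are totally geodesic, hence minimal, one has $\Delta_{g_t}(\pi^*f)=\pi^*(\Delta_Bf)$ for every function $f$ on $B$; thus the base eigenfunctions lift to eigenfunctions of $\Delta_{g_t}$ with the \emph{$t$-independent} eigenvalues $\mu_1\le\mu_2\le\cdots\to+\infty$. On the other hand, the canonical-variation formula for the scalar curvature (see \cite{besse}) gives $\scal(g_t)=\scal_B+t^{-2}\scal_F-t^2|A|^2$, and the hypothesis $\Ric_F\ge(l-1)k_1>0$ forces $\scal_F>0$, so $\Theta(t)\to+\infty$ monotonically as $t\to0$. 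Hence for every $j$ there is a unique instant $t_j$ with $(m-1)\mu_j=\scal(g_{t_j})$, and $t_j\to0$; these are the candidate degeneracy instants, and at each of them the count of \emph{base} modes below $\Theta(t)$ increases by the multiplicity of $\mu_j$.

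The crux, and the step I expect to be the main obstacle, is the \emph{compensation} problem emphasized in the introduction: ruling out that eigenvalues coming from modes with nontrivial vertical part cross $\Theta(t)$ in the opposite direction and cancel this index change. Here the curvature hypotheses enter through Lichnerowicz–Obata estimates. For a function $\phi$ with zero fiberwise average, the fiberwise Poincaré inequality on the fibers $(F,t^2g_F)$ together with $\Ric_F\ge(l-1)k_1$ (Lichnerowicz gives $\lambda_1(F)\ge l\,k_1$) yields $\int_M|\nabla^V_{g_t}\phi|^2\ge (l\,k_1/t^2)\int_M\phi^2$, so that any such $\phi$ has Rayleigh quotient at least $l\,k_1/t^2$. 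Comparing with $\Theta(t)$, the bound $\scal_F<l(m-1)k_1$ is exactly what makes $\Theta(t)<l\,k_1/t^2$ for small $t$, while the remaining hypotheses $\Ric_{(M,g_\tau)}\ge(m-1)k_2$ and $\scal_B\le m(m-1)k_2$ supply, through the Lichnerowicz estimate on the total space at the reference scale $\tau$, the complementary control on the lower-order horizontal and $A$-tensor terms needed to guarantee that this strict inequality persists on an interval $(0,t_0)$ containing all but finitely many of the $t_j$. With the gap in hand, any eigenfunction below threshold must be fiberwise constant: if $W$ denotes the span of the eigenspaces with eigenvalue below $\Theta(t)$, every nonzero $\phi\in W$ has Rayleigh quotient below $\Theta(t)<l\,k_1/t^2$, hence cannot be fiberwise-mean-zero, so $W$ injects into the space of lifted base functions. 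Therefore the Morse index equals precisely the number of base modes below $\Theta(t)$, and there is no compensation.

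Finally I would assemble the conclusion. The Morse index is now an explicit, integer-valued function that is non-decreasing in $1/t$, locally constant away from the $t_j$ (so $\hat g_t$ is nondegenerate there), and jumps by $\dim\ker(\Delta_B-\mu_j)$ at each $t_j$; since $\Theta$ is strictly monotone near each $t_j$ the crossings are transversal and the index genuinely changes. By the change-of-Morse-index bifurcation criterion, each $t_j$ is a bifurcation instant for \eqref{eq:critpts}, and $\{t_j\}$ accumulates at $0$. Proposition~\ref{prop:mult} then promotes each such bifurcation to the existence of at least $3$ solutions of the Yamabe problem in $[g_t]$ for $t$ in a suitable infinite subset $\mathcal T$ of positive reals accumulating at $0$, completing the proof.
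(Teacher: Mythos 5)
Your proposal is correct and follows the same overall strategy as the paper's proof (Theorem~\ref{thm:bifcurv} combined with Proposition~\ref{prop:mult}): locate the degeneracy instants where $\scal(g_t)/(m-1)$ crosses a constant eigenvalue coming from the base, rule out compensation via Lichnerowicz estimates, conclude a genuine Morse index jump and hence bifurcation via Proposition~\ref{prop:bifmorseindex}, and finish with Proposition~\ref{prop:mult}. The one step you implement differently is the no-compensation argument. The paper works with the explicit eigenvalue branches $\lambda^{k,j}(t)=\mu_k+(t^{-2}-1)\phi_j$ of Proposition~\ref{cor:specincl} and shows $\scal(g_t)/(m-1)<\lambda^{1,1}(t)$ for small $t$, using \emph{both} Lichnerowicz estimates: $\phi_1\ge l\,k_1$ on the fiber and $\mu_1\ge m\,k_2$ on $(M,g_\tau)$, the latter combined with $\scal_B\le m(m-1)k_2$ to absorb the bounded term $\scal_B\circ\pi-t^2\|A\|^2$. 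You instead bound the Rayleigh quotient of fiberwise-mean-zero functions from below by $l\,k_1/t^2$ and compare with $\Theta(t)$; this uses only the fiber estimate, and the $k_2$ hypotheses are actually superfluous for this step, since compactness of $M$ already makes $t^2(\scal_B-t^2\|A\|^2)$ negligible against the strict gap $\scal_F<l(m-1)k_1$ --- a marginally cleaner route, though invoking them does no harm.

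Two points to tighten. First, the inference from ``no nonzero element of $W$ is fiberwise-mean-zero'' to ``the Morse index equals the number of base modes below $\Theta(t)$'' is not purely an injectivity statement: the fiberwise-averaging map sends $W$ into the infinite-dimensional space $L^2(B)$, so injectivity alone bounds nothing. What you need is that $\Delta_t$ preserves the orthogonal splitting of $L^2(M)$ into fiberwise-constant functions and their complement (equivalently, the simultaneous diagonalizability of $\Delta_M$ and $\Delta_v$ from \cite{bbb} that the paper quotes); then any eigenfunction with eigenvalue below $\Theta(t)$ has vanishing mean-zero part, hence is the lift of a base eigenfunction by Proposition~\ref{prop:eigenlift}, which gives exactly the identification of the index you assert. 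Second, before invoking Proposition~\ref{prop:mult} you should record that $N(g_t)>0$ on the relevant range of $t$; this is immediate from your computation of the index (which tends to infinity as $t\to 0$), but it is a stated hypothesis of that proposition and the paper checks it explicitly.
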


The paper is organized as follows. In Section~\ref{sec:basics}, we briefly review the variational setup for the Yamabe problem and the bifurcation techniques (Propositions~\ref{prop:bifmorseindex} and \ref{prop:equivbif}) introduced in \cite{bp,lpz}. The effect of shrinking the fibers on the spectrum of a Riemannian submersion with totally geodesic fibers is recalled in Section~\ref{sec:lapl}. The core of the proof of Theorem~\ref{thm:main} (Theorem~\ref{thm:bifhom}) is given in Section~\ref{sec:bifhom}. Section~\ref{sec:metrics} describes several examples to which these theorems apply. Section~\ref{sec:curv} contains the core of the proof of Theorem~\ref{thm:curv} (Theorem~\ref{thm:bifcurv}). Finally, Section~\ref{sec:final} explains how to translate bifurcation of solutions into the multiplicity results claimed above.


\section{Variational framework and Bifurcation criteria}\label{sec:basics}

We start by briefly recalling the classic variational setup for the Yamabe problem, see~\cite{bp,lpz,Schoen87} for details. Let $M$ be a closed smooth manifold of dimension $m$. Consider the set $\Met(M)$ of $C^{r,\alpha}$ Riemannian metrics on $M$, which is an open convex cone in the Banach space of $C^{r,\alpha}$ symmetric $(0,2)$-tensors. Henceforth we fix $r\geq 3$ and $\alpha\in\,]0,1[$. For each $g\in\Met(M)$, define its $C^{r,\alpha}$ conformal class by
\begin{equation*}
[g]=\left\{\phi\,g:\phi\in C^{r,\alpha}(M),\phi>0\right\}.
\end{equation*}
Denote by $\Met_1(M)$ the smooth codimension $1$ embedded submanifold of $\Met(M)$ formed by unit volume metrics. Finally, let
\begin{equation*}
[g]_1=\Met_1(M)\cap [g].
\end{equation*}
The set $[g]_1$ is a codimension $1$ Banach submanifold of $[g]$, and its tangent space at the metric $g$ can be canonically identified as
\begin{equation}\label{eq:idtmet}
T_{g}[g]_1\cong\left\{\psi\in C^{r,\alpha}(M):\int_M\psi\; \vol_{g}=0\right\}.
\end{equation}
The choice of H\"older regularity $C^{r,\alpha}$ for the metric tensors and functions is due to a technical analytic aspect of our theory, that employs certain \emph{Schauder estimates}. To simplify the exposition, these technicalities will not be discussed in further detail.

\subsection{Hilbert-Einsten functional}\label{subs:var}
Fix a metric $g_0$ on $M$ and consider the Hilbert-Einstein functional $\mathcal A$ defined in \eqref{eq:A}. The restriction of $\mathcal A$  to $[g_0]_1$ is smooth, and its critical points are the constant scalar curvature metrics in $[g_0]_1$. Given one such critical point $g\in [g_0]_1$, the second variation of $\mathcal A$ at $g$ can be identified with the quadratic form on \eqref{eq:idtmet} given by
\begin{equation}\label{eq:jacobiop}
\dd^2\big(\mathcal A|_{[g]_1}\big)(g)(\psi,\psi)=\tfrac{m-2}{2}\int_M\big((m-1)\Delta_{g}\psi-\scal(g)\psi\big)\psi\;\vol_{g},
\end{equation}
where $\Delta_g$ is the Laplace-Beltrami operator of $g$, or \emph{Laplacian} of $g$, with the sign convention such that its spectrum is non-negative. The above quadratic form is represented by the (formally) self-adjoint elliptic operator
\begin{equation}\label{eq:jacobi}
J_g(\psi)=\tfrac{(m-1)(m-2)}{2}\left(\Delta_g\psi-\tfrac{\scal(g)}{m-1}\,\psi\right),
\end{equation}
that we call the \emph{Jacobi operator} at $g$. From the above formulas, the critical point $g\in [g_0]_1$ is \emph{nondegenerate} (in the usual sense of Morse theory) if and only if $\frac{\scal(g)}{m-1}$ is not an eigenvalue of the Laplacian $\Delta_g$, or if $\scal(g)=0$. Moreover, the \emph{Morse index} $N(g)$ of this critical point is given by the number of negative eigenvalues of \eqref{eq:jacobi}, i.e., the number of \emph{positive} eigenvalues of $\Delta_g$, counted with multiplicity, that are less than $\tfrac{\scal(g)}{m-1}$. More precisely, denote by
\[0<\lambda_1^g\le\lambda_2^g\le\lambda_3^g\le\ldots\le\lambda_j^g\le\ldots\]
the sequence of eigenvalues of the Laplacian $\Delta_g$, repeated according to their multiplicity. Then, the Morse index of $g$ is given by
\begin{equation}\label{eq:morseidx}
N(g)=\max\left\{j\in\N:\lambda_j^g<\tfrac{\scal(g)}{m-1}\right\}.
\end{equation}

\begin{remark}\label{rem:renorm}
For the purposes of this paper, the relevant data are the signs of the eigenvalues of the operator \eqref{eq:jacobi}. Note that, given $\alpha>0$, one has $\Delta_{\alpha g}=\frac1\alpha\Delta_g$ and $\scal(\alpha g)=\frac1\alpha\scal(g)$. Hence, the spectrum of \eqref{eq:jacobi} scales in a trivial way under homotheties, in the sense that negative (respectively positive) eigenvalues remain negative (respectively positive). On the other hand, $\vol_{\alpha g}=\alpha^\frac m2\vol_g$. Thus, whenever necessary, we may renormalize a metric to have unit volume without compromising the above spectral theory.
\end{remark}

\subsection{A classic result of variational bifurcation theory}
Let us turn to the main tool used in this paper, bifurcation theory. Consider a continuous path $g_t\in\Met(M)$ of solutions to the Yamabe problem, as in \eqref{eq:critpts}. A value $t_*\in [a,b]$ is a \emph{bifurcation value} for the family $g_t$ if there exists a sequence $\{t_q\}$ in $[a,b]$ that converges to $t_*$ and a sequence $\{g_q\}$ in $\Met(M)$ that converges to $g_{t_*}$ satisfying for all $q\in\N$:
\begin{itemize}
\item[(i)] $g_{t_q}\in [g_q]$, but $g_q\neq g_{t_q}$;
\item[(ii)] $\Vol(g_q)=\Vol(g_{t_q})$;
\item[(iii)] $\scal(g_q)$ is constant.
\end{itemize}
If $t_*\in[a,b]$ is \emph{not} a bifurcation value, then the family $g_t$ is \emph{locally rigid} at $t_*$. In other words, the family $g_t$ is locally rigid at $t_*\in[a,b]$ if there exists a neighborhood $U$ of $g_{t_*}$ in $\Met(M)$ such that, for $t\in[a,b]$ sufficiently close to $t_*$, the conformal class $[g_t]$ contains a unique metric of constant scalar curvature in $U$ whose volume equals the volume of $g_t$.

Using a suitable version of the Implicit Function Theorem, one sees that if $g_{t_*}$ is a nondegenerate critical point, then $g_t$ is locally rigid at $t_*$, see~\cite[Prop 3.1]{lpz}. Thus, \emph{degeneracy} is a necessary condition for bifurcation, however it is in general not sufficient. A classic result in variational bifurcation theory states that, given a continuous path of smooth functionals and a continuous path of critical points, there is a bifurcating branch issuing from the given path at each point where the Morse index changes (see \cite[Thm II.7.3]{kielhofer} or \cite[Thm 2.1]{smwas}). Translating this result to our variational framework, we get the following.

\begin{proposition}\label{prop:bifmorseindex}
Let $[a,b]\ni t\mapsto g_t\in\Met(M)$ be a continuous path of constant scalar curvature metrics on $M$, and
assume that $a$ and $b$ are not degeneracy values for $g_t$. If $N(g_a)\neq N(g_b)$, then there exists a bifurcation value $t_*\in\left]a,b\right[$ for the family $g_t$.
\end{proposition}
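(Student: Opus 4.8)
The plan is to recast the moving family of variational problems $\mathcal A|_{[g_t]_1}$ as a single family of functionals on a fixed domain with a fixed critical point, and then invoke the classical variational bifurcation theorem cited above. First I would replace $g_t$ by the homothetic unit volume metric $\hat g_t$, which by Remark~\ref{rem:renorm} changes neither the Morse index nor the degeneracy values, and then parametrize conformal metrics by their conformal factor: writing an element of $[\hat g_t]_1$ as $\phi\,\hat g_t$ with $\phi\in C^{r,\alpha}(M)$, $\phi>0$, and $\int_M\phi^{m/2}\,\vol_{\hat g_t}=1$, the critical point of interest becomes the constant function $\phi\equiv 1$ for every $t$. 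In this way the path of critical points $\hat g_t$ collapses to a single point in a fixed space, and the parameter $t$ enters only through the functional $F_t(\phi):=\mathcal A(\phi\,\hat g_t)$ and the gently varying constraint. This is the standard device that lets one apply an abstract bifurcation theorem phrased for a fixed base point.

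Next I would identify the second variation. By \eqref{eq:jacobiop}--\eqref{eq:jacobi}, the Hessian of $F_t$ at $\phi\equiv 1$, acting on the tangent space \eqref{eq:idtmet}, is represented by the Jacobi operator $J_{g_t}$, a formally self-adjoint second-order elliptic operator. Passing to the equivalent eigenvalue problem via the compact resolvent of $\Delta_{g_t}$, the gradient of $F_t$ takes the form $\Id$ minus a compact operator, so the Hessian is Fredholm of index zero and the Morse index $N(g_t)$ in \eqref{eq:morseidx} is finite. The nondegeneracy hypothesis at the endpoints says precisely that $\tfrac{\scal(g_a)}{m-1}$ and $\tfrac{\scal(g_b)}{m-1}$ are not eigenvalues of the respective Laplacians, i.e. that $J_{g_a}$ and $J_{g_b}$ are invertible; hence $t=a$ and $t=b$ are not degeneracy values and the Morse indices there are well defined.

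With this setup the conclusion follows from the abstract theorem of \cite{kielhofer,smwas}. Since the Morse index is integer-valued and locally constant away from degeneracy values, the hypothesis $N(g_a)\neq N(g_b)$ forces at least one eigenvalue of $J_{g_t}$ to cross zero at some interior parameter; equivalently, the spectral flow along $[a,b]$ is nonzero. The variational structure is essential here: unlike Krasnoselskii-type criteria, which require an odd crossing number, the variational bifurcation theorem produces a bifurcating branch at any jump of the Morse index, regardless of the parity of the multiplicity of the crossing eigenvalue. This yields a value $t_*\in\,]a,b[$ together with sequences $t_q\to t_*$ and $\phi_q\to 1$ with $\phi_q$ nonconstant solving the critical point equation; translating back through $g_q=\phi_q\,\hat g_{t_q}$ produces metrics satisfying conditions (i)--(iii) in the definition of a bifurcation value.

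The main obstacle I anticipate is not the abstract bifurcation step, which is essentially a citation, but the bookkeeping needed to place the problem genuinely on a fixed space: both the constraint $\{\int_M\phi^{m/2}\,\vol_{\hat g_t}=1\}$ and the model tangent space \eqref{eq:idtmet} depend on $t$ through the volume measure, so one must trivialize this family of Banach submanifolds over $[a,b]$ before the cited theorem applies. A related technical point, and the reason for the $C^{r,\alpha}$ choice flagged earlier, is to guarantee via Schauder estimates that each $F_t$ is $C^2$ with Fredholm Hessian, so that the Morse index is finite and the compact-vector-field formulation is legitimate.
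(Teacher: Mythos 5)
Your proposal is correct and follows essentially the same route as the paper, which simply cites \cite[Thm 3.3]{lpz}: that reference carries out exactly the reduction you describe, namely trivializing the family $[\hat g_t]_1$ over a fixed Banach space with fixed critical point $\phi\equiv 1$, checking that the Hessian $J_{g_t}$ is Fredholm of index zero, and invoking the classical variational bifurcation theorem of \cite{kielhofer,smwas} at a jump of the Morse index. The only minor imprecision is your claim that endpoint nondegeneracy is equivalent to $\tfrac{\scal(g)}{m-1}\notin\spec(\Delta_g)$; as noted after \eqref{eq:jacobi}, the case $\scal(g)=0$ is also nondegenerate because constants are excluded from the tangent space \eqref{eq:idtmet}, but this does not affect your argument.
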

\begin{proof} See \cite[Thm 3.3]{lpz}.\end{proof}

\subsection{Equivariant variational bifurcation}\label{sub:equivbif}
In many applications, the criterion of Proposition~\ref{prop:bifmorseindex} cannot be employed because establishing a change of the Morse index at a given degeneracy value may be a difficult task. However, when the variational setup has an \emph{equivariant} nature, one can replace the change of Morse index condition with a more general condition based on the representation theory of the group of symmetries of the variational problem. Such more general condition fits perfectly the setup discussed in the present paper, and we now describe it, in the variational framework of the Yamabe problem.

Suppose there exists a finite-dimensional \emph{nice}\footnote{A group $G$ is \emph{nice} if, given unitary representations $\pi_1$ and $\pi_2$ of $G$ on Hilbert spaces $V_1$ and $V_2$ respectively, such that $B_1(V_1)/S_1(V_1)$ and $B_1(V_2)/S_1(V_2)$ have the same (equivariant) homotopy type as $G$-spaces, then $\pi_1$ and $\pi_2$ are unitarily equivalent. Here, $B_1$ and $S_1$ denote respectively the unit ball and the unit sphere in the specified Hilbert space, and the quotient $B_1(V_i)/S_1(V_i)$ is meant in the topological sense (i.e., it denotes the unit ball of $V_i$ with its boundary contracted to one point).

For example, any compact connected Lie group $G$ is nice. More generally, any compact Lie group with less than $5$ connected components is nice. Denoting by $G^0$ the identity connected component of $G$, then $G$ is nice if the discrete part $G/G^0$ is either the product of a finite number of copies of $\mathds Z_2$ (e.g., the case $G=\mathrm O(n)$); or the product of a finite number of copies of $\mathds Z_3$; or if $G/G^0=\mathds Z_4$.}  (in the sense of~\cite{smwas}) Lie group $K$ that acts (on the left) by diffeomorphisms on a compact manifold $M$, and let $[a,b]\ni t\mapsto g_t\in\Met(M)$ be a continuous path of constant scalar curvature metrics on $M$. Up to a suitable normalization, let us assume that each $g_t$ has unit volume (see Remark~\ref{rem:renorm}). The $K$-action on $M$ induces a (right) $K$-action on $\Met(M)$ by pull-back; i.e., the action of $k\in K$ on $g\in\Met(M)$ is $k^*g$. Assume that the $K$-action on $M$ is isometric with respect to all metrics $g_t$, i.e.,
\begin{equation}\label{eq:gtfixed}
k^*(g_t)=g_t, \quad\mbox{ for all } k\in K, \, t\in [a,b].
\end{equation}
In this situation, the $K$-action on $\Met(M)$ leaves every conformal class $[g_t]$ invariant, and also $[g_t]_1$ is $K$-invariant for all $t$. Note that \eqref{eq:gtfixed} means that, for all $t$, $g_t$ is a fixed point of the $K$-action on $\Met(M)$.

It is easy to see that, given $\phi\colon M\to\R$ a positive function,
\begin{equation*}
\mathcal A\big(\phi^{\frac{4}{n-2}}\,g_t\big)=\int_M \left(4\tfrac{n-1}{n-2}\, \phi\,\Delta_{t}\phi+\scal(g_t)\phi^2\right)\vol_{g_t},
\end{equation*}
where $\Delta_t$ is the Laplacian of $g_t$. Using that $k^*(\phi\,g_t)=(\phi\circ k)g_t$, right-composition with isometries commutes with $\Delta_{t}$ and $\scal(g_t)$ is constant, it follows from a change of variables that
\begin{equation*}
\mathcal A\big(k^*(\phi\,g_t)\big)=\mathcal A\big(\phi\,g_t\big), \quad\mbox{ for all } k\in K, \, t\in [a,b].
\end{equation*}
Thus, denoting by $\mathcal A_t$ the restriction of the Hilbert-Einstein functional \eqref{eq:A} to the conformal class $[g_t]_1$, we have that $\mathcal A_t$ is invariant under the $K$-action on $[g_t]_1$.

For each eigenvalue $\lambda\in\spec(\Delta_t)$, denote by $E_t^\lambda\subset L^2(M,\vol_{g_t})$ the correspondent eigenspace. Elements of $E^\lambda_t$ are smooth functions, and $\dim E_t^\lambda<+\infty$ is the multiplicity of $\lambda$. It is easy to see that, for every $t\in [a,b]$ and $\lambda\in\spec(\Delta_t)$, there is a representation $\pi_t^\lambda$ of $K$ in $E_t^\lambda$, given by right-composition with isometries:
\begin{equation}\label{eq:repr}
\pi_t^\lambda(k)\psi=\psi\circ k, \quad\mbox{ for all } k\in K, \, \psi\in E^\lambda_t.
\end{equation}
Note this is (the restriction to $E^\lambda_t$ of) the isotropy representation of the $K$-action on $[g_t]_1$ at the fixed point $g_t$, i.e., the linearization of this $K$-action at $g_t$. We remark that since the $K$-action by pull-back on $[g_t]_1$ is a \emph{right} action, its linearization at a fixed point is actually an \emph{anti}-representation. However, we will henceforth not make distinctions between left/right actions and representations/anti-representations since this does not affect our arguments.

Denote by $\mathcal N_t$ the set of negative eigenvalues of the operator \eqref{eq:jacobi} i.e.,
\begin{equation*}
\mathcal N_t:=\spec\big(J_{g_t}\big)\bigcap\,]-\infty,0[.
\end{equation*}
Now, for all $t\in [a,b]$, define the \emph{negative isotropic representation} $\pi^-_t$ of $K$ in $\bigoplus_{\lambda\in\mathcal N_t}E_t^\lambda$ as the direct sum representation:
\begin{equation*}
\pi^-_t:=\bigoplus_{\lambda\in \mathcal N_t} \pi_t^\lambda.
\end{equation*}
Notice that the degree\footnote{i.e., the dimension of the vector space $\bigoplus_{\lambda\in \mathcal N_t} E_t^\lambda$.} of $\pi^-_t$
is always finite and equal to the Morse index $N(g_t)$ defined in \eqref{eq:morseidx}. Finally, recall that two representations $\pi_i\colon K\to\mathrm{GL}(V_i)$, $i=1,2$, are \emph{isomorphic} if there exists a $K$-equivariant isomorphism $T\colon V_1\to V_2$, i.e., such that $\pi_2(k)\circ T=T\circ\pi_1(k)$ for all $k\in K$.

\begin{proposition}\label{prop:equivbif}
Let $K$ be a nice Lie group acting on a compact manifold $M$ and $[a,b]\ni t\mapsto g_t\in\Met(M)$ be a continuous path of (unit volume) constant scalar curvature metrics. Suppose the $K$-action is isometric on $(M,g_t)$, for all $t\in [a,b]$. If $a$ and $b$ are not degeneracy values for $g_t$, and if the negative isotropic representations $\pi^-_a$ and $\pi^-_b$ are not isomorphic, then there exists a bifurcation value $t_*\in\,]a,b[$ for the family $g_t$.
\end{proposition}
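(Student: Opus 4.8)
The plan is to deduce Proposition~\ref{prop:equivbif} from the equivariant variational bifurcation theorem of Smoller and Wasserman \cite{smwas}, in exact parallel with the way Proposition~\ref{prop:bifmorseindex} is obtained from its non-equivariant counterpart in \cite[Thm 3.3]{lpz}, and with the scheme already employed in \cite{bp}. To this end, I would first trivialize the varying family of constraint manifolds $[g_t]_1$. Writing each conformal metric as $\phi\,g_t$ with $\phi>0$ and using the identification \eqref{eq:idtmet}, every $\mathcal A_t$ becomes a smooth functional on a fixed Banach manifold of positive functions, and the computation recorded before \eqref{eq:repr} shows it is invariant under the $K$-action induced by \eqref{eq:gtfixed}. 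The constant function representing the unit-volume metric $\hat g_t$ is then a critical point fixed by the $K$-action, so the branch $t\mapsto\hat g_t$ plays the role of the $K$-fixed trivial branch required by the abstract theorem. After passing to a suitable Hilbert-space realization, the gradient of $\mathcal A_t$ takes the form identity minus a compact $K$-equivariant map, so that its linearization at $\hat g_t$ is a self-adjoint compact perturbation of the identity whose sign pattern coincides, up to a positive factor, with that of the Jacobi operator $J_{g_t}$ from \eqref{eq:jacobi}.

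Second, I would identify the Hessian data with the negative isotropic representation. The elliptic operator $J_{g_t}$ has discrete spectrum accumulating only at $+\infty$, and its negative eigenspace, intersected with the constraint $T_{g_t}[g_t]_1$ of mean-zero functions, is precisely $\bigoplus_{\lambda\in\mathcal N_t}E_t^\lambda$. Here one uses that the constant functions, which carry the eigenvalue $-\tfrac{m-2}{2}\,\scal(g_t)$ of $J_{g_t}$, are automatically discarded by the mean-zero condition, so that the surviving negative directions are exactly those attached to positive $\Delta_t$-eigenvalues below $\tfrac{\scal(g_t)}{m-1}$, in agreement with \eqref{eq:morseidx}. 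On this finite-dimensional space the isotropy representation of the $K$-action at the fixed point $\hat g_t$ restricts, by \eqref{eq:repr}, to the negative isotropic representation $\pi^-_t$. Since $a$ and $b$ are not degeneracy values, $0\notin\spec(J_{g_a})\cup\spec(J_{g_b})$, so the negative eigenspaces keep constant dimension near the endpoints and the representations $\pi^-_a,\pi^-_b$ are unambiguously defined.

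Third, I would check the structural hypotheses of \cite{smwas}: ellipticity of $\Delta_t$ yields that the linearized gradient is Fredholm of index zero with finite-dimensional negative eigenspace; the gradient is a $C^1$ map commuting with the $K$-action because right-composition with $g_t$-isometries commutes with $\Delta_t$ and $\scal(g_t)$ is constant; and $K$ is nice by hypothesis. Granting these, the equivariant theorem asserts that the $K$-fixed branch $\hat g_t$ cannot remain locally rigid throughout $[a,b]$ once $\pi^-_a$ and $\pi^-_b$ fail to be $K$-equivariantly isomorphic. Thus there is a bifurcation value $t_*\in\,]a,b[$, and unwinding the trivialization shows that the associated bifurcating solutions provide sequences satisfying conditions (i)--(iii) in the definition of bifurcation for the family $g_t$.

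The main obstacle is the functional-analytic reconciliation buried in the first and third steps. The Smoller--Wasserman theorem is formulated in a Hilbert (gradient) setting, whereas the Yamabe equation must be treated in the Hölder spaces $C^{r,\alpha}$ so that Schauder estimates apply and the abstract bifurcating branch is guaranteed to consist of genuine constant scalar curvature metrics. Bridging the two requires careful elliptic-regularity bootstrapping together with the bookkeeping ensuring that the $K$-action commutes with every operator involved and that the spectral decomposition of $J_{g_t}$ varies continuously in $t$; these are precisely the technicalities alluded to after \eqref{eq:idtmet}, and they are carried out in \cite{bp,lpz}.
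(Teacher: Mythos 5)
Your proposal is correct and follows essentially the same route as the paper: the paper's proof is just a citation of the Smoller--Wasserman equivariant bifurcation theorem \cite[Thm 3.1]{smwas}, in the slightly generalized form for functionals on the varying manifolds $[g_t]_1$ worked out in \cite[Thm 3.4, Thm A.2]{lpz}. The trivialization of the constraint manifolds, the identification of the negative eigenspace of $J_{g_t}$ on mean-zero functions with $\bigoplus_{\lambda\in\mathcal N_t}E_t^\lambda$ carrying $\pi^-_t$, and the H\"older-versus-Hilbert reconciliation you describe are precisely the details the paper delegates to those references.
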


\begin{proof}
This result is a direct application of an equivariant bifurcation result due to Smoller and Wasserman~\cite[Thm~3.1]{smwas}. More precisely, one needs a slightly more general statement of the result, applied to functionals defined on a varying manifold. In our case, the functional
is the Hilbert-Einstein functional $\mathcal A$, defined on the varying manifold $[g_t]_1$. Details can be found in \cite[Thm 3.4, Thm A.2]{lpz}.
\end{proof}

Notice the above is a refinement of Proposition~\ref{prop:bifmorseindex}, which corresponds to saying that $\pi^-_a$ and $\pi^-_b$ do not have the same degree (and hence cannot be isomorphic).


\section{Laplacian on collapsing Riemannian submersions}\label{sec:lapl}

In order to study bifurcation from an initial family $g_t$ of solutions to the Yamabe problem on $M$, it is crucial to have a good understanding of the spectra of their Laplacians $\Delta_t$. In all our applications, the family $g_t$ is obtained as a deformation of the original metric $g$ on the total space $M$ of a Riemannian submersion with totally geodesic fibers \eqref{eq:submersion}, by multiplying it by a factor $t^2$ in the direction of the fibers. We will be particularly interested in the behavior of the spectrum of $\Delta_t$ as the fibers collapse to a point, i.e., as $t\to 0$.

The effect of such deformation on the spectrum was first studied by B\'erard-Bergery and Bourguignon \cite{bbb}, where $g_t$ is called \emph{canonical variation} of $g$. The starting point is that \eqref{eq:submersion} remains a Riemannian submersion with totally geodesic fibers when $g$ is replaced by $g_t$, see \cite[Prop 5.2]{bbb}. For the readers' convenience, we briefly recall some related results, that are discussed in more detail in \cite[Sec 3]{bp}.

\subsection{Vertical Laplacian and lifts of eigenfunctions}
Define the \emph{vertical Laplacian} $\Delta_v$ on a function $\psi\colon M\to\R$ by
\begin{equation*}
(\Delta_v \psi)(p):=(\Delta_F \psi|_{F_p})(p),\quad \mbox{ for all } p\in M,
\end{equation*}
where $\Delta_F$ is the Laplacian of the fiber and $F_p=\pi^{-1}(\pi(p))$ is the fiber through $p\in M$.
Just as a usual Laplacian, the vertical Laplacian is a non-negative self-adjoint unbounded operator on $L^2(M)$, however it is \emph{not elliptic} (unless $\pi$ is a covering). Since the fibers are isometric, $\Delta_v$ has discrete spectrum equal to that of the Laplacian $\Delta_F$ of the fiber. Let us denote
\begin{equation}
\begin{aligned}\label{spectra}
\spec(\Delta_M) &= \big\{0=\mu_0<\mu_1<\cdots<\mu_k\nearrow +\infty\big\},\\
\spec(\Delta_v) &= \big\{0=\phi_0<\phi_1<\cdots<\phi_j\nearrow +\infty\big\},
\end{aligned}
\end{equation}
where these eigenvalues are \emph{not repeated} according to their multiplicity. Note that the multiplicity of the eigenvalues of $\Delta_M$ is always \emph{finite}, but the eigenvalues of $\Delta_v$ might have \emph{infinite} multiplicity. For instance, $\Delta_v \widetilde\psi=0$ only implies that $\widetilde\psi$ is constant \emph{along the fibers}, i.e., $\widetilde\psi=\psi\circ\pi$ for some function $\psi\colon B\to\R$ on the base.

It is easy to see that, for any $\psi\colon B\to\R$ and its \emph{lift} $\widetilde\psi:=\psi\circ\pi$,
\begin{equation}\label{eq:laplasubm}
\Delta_M\widetilde\psi=(\Delta_B\psi)\circ\pi+g\big(\!\grad_g\widetilde\psi, \vec H\big),
\end{equation}
where $\vec H$ is the mean curvature vector field of the fibers. Since we assumed the fibers of $\pi$ are totally geodesic, $\vec H$ vanishes identically. Thus, if $\psi$ is an eigenfunction of $\Delta_B$, then its lift $\widetilde\psi$ is an eigenfunction of $\Delta_M$ with the same eigenvalue (and constant along the fibers). Therefore, there is a natural inclusion
\begin{equation}\label{eq:specincl}
\spec(\Delta_B)\subset\spec(\Delta_M).
\end{equation}
Conversely, if $\psi\colon M\to\R$ is constant along the fibers and satisfies $\Delta_M\psi=\lambda\psi$, then there exists $\check\psi\colon B\to\R$ such that $\Delta_B\check\psi=\lambda\check\psi$ and $\psi=\check\psi\circ\pi$. Summing up, it follows from \eqref{eq:laplasubm}, after checking the adequate regularity hypotheses (see \cite[Lemma 3.11]{bmp}), that the following holds.

\begin{proposition}\label{prop:eigenlift}
If $\pi\colon M\to B$ is a Riemannian submersion with totally geodesic fibers, then an eigenfunction of $\Delta_M$ is constant along the fibers if and only if it is the lift of an eigenfunction of $\Delta_B$.
\end{proposition}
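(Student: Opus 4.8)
The plan is to prove both directions of the equivalence, with the crux being the converse direction. The forward direction is essentially already contained in the discussion preceding the statement, via formula~\eqref{eq:laplasubm}: if $\psi\colon B\to\R$ is an eigenfunction of $\Delta_B$ with $\Delta_B\psi=\lambda\psi$, then its lift $\widetilde\psi=\psi\circ\pi$ satisfies, using that the mean curvature vector $\vec H$ vanishes because the fibers are totally geodesic,
\[
\Delta_M\widetilde\psi=(\Delta_B\psi)\circ\pi=\lambda\,(\psi\circ\pi)=\lambda\,\widetilde\psi,
\]
so $\widetilde\psi$ is an eigenfunction of $\Delta_M$ that is manifestly constant along the fibers. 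This handles the ``if'' part.

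For the converse, suppose $\psi\colon M\to\R$ is an eigenfunction of $\Delta_M$, so $\Delta_M\psi=\lambda\psi$, and that $\psi$ is constant along the fibers. Since $\psi$ is constant on each fiber $F_p=\pi^{-1}(\pi(p))$, it descends to a well-defined function $\check\psi\colon B\to\R$ characterized by $\psi=\check\psi\circ\pi$; the first thing I would do is confirm that $\check\psi$ has the regularity needed to be a legitimate eigenfunction on $B$, which is exactly the technical point deferred to \cite[Lemma 3.11]{bmp}. Granting that, I would apply \eqref{eq:laplasubm} in reverse: because $\psi=\check\psi\circ\pi$ is a lift and $\vec H=0$, the equation reads $\Delta_M\psi=(\Delta_B\check\psi)\circ\pi$. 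Combining this with $\Delta_M\psi=\lambda\psi=\lambda(\check\psi\circ\pi)$ gives
\[
(\Delta_B\check\psi)\circ\pi=\lambda\,(\check\psi\circ\pi).
\]
Since $\pi$ is surjective, composing with $\pi$ is injective on functions on the base, so I may cancel $\pi$ to conclude $\Delta_B\check\psi=\lambda\check\psi$. Thus $\psi$ is the lift of the eigenfunction $\check\psi$ of $\Delta_B$, completing the converse.

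The one genuine obstacle, and the only place real work hides, is the regularity claim that the descended function $\check\psi$ is smooth (or at least $C^2$) on the base. A priori one only knows $\psi$ is smooth on $M$ and constant along fibers; one must argue that the induced function on $B$ inherits enough regularity for $\Delta_B\check\psi$ to make sense pointwise and for the cancellation step to be valid. Locally this follows from choosing smooth horizontal sections of the submersion, so that $\check\psi$ equals $\psi$ composed with such a section, but care is needed so that the local descriptions patch to a globally well-defined smooth function; this is precisely the content invoked from \cite[Lemma 3.11]{bmp}, which I would cite rather than reprove. Everything else is a formal manipulation of \eqref{eq:laplasubm} together with the vanishing of $\vec H$ and the surjectivity of $\pi$.
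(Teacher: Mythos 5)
Your proposal is correct and follows essentially the same route as the paper: the forward direction is the computation with \eqref{eq:laplasubm} and $\vec H=0$, and the converse descends the fiber-constant eigenfunction to the base, defers the regularity of the descended function to \cite[Lemma 3.11]{bmp}, and then applies \eqref{eq:laplasubm} again. You also correctly isolate the regularity of $\check\psi$ as the only nontrivial point, which is exactly the issue the paper flags and outsources to the same reference.
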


\subsection{Spectrum of deformed metrics}
Another consequence of having totally geodesic fibers is that $L^2(M)$ has a Hilbert basis consisting of simultaneous eigenfunctions of the original Laplacian $\Delta_M$ and the vertical Laplacian $\Delta_v$, see \cite[Thm 3.6]{bbb}. This means that these operators can be \emph{simultaneously diagonalized}, in the appropriate sense. Furthermore, it is a simple calculation that the Laplacian $\Delta_t$ of the deformed metric $g_t$ is $\Delta_t=\Delta_M+\left(\tfrac{1}{t^2}-1\right)\Delta_v$, see \cite[Prop 5.3]{bbb}. From this, we get the following description of its spectrum.

\begin{proposition}\label{cor:specincl}
For each $t>0$, the following inclusion holds
\begin{equation*}
\spec(\Delta_t)\subset\spec(\Delta_M)+\left(\tfrac{1}{t^2}-1\right)\spec(\Delta_v).
\end{equation*}
Since the above sets are discrete, every eigenvalue $\lambda(t)$ of $\Delta_t$ is of the form
\begin{equation}\label{eq:lambdakj}
\lambda^{k,j}(t)=\mu_k+\left(\tfrac{1}{t^2}-1\right)\phi_j,
\end{equation}
for some eigenvalues $\mu_k$ and $\phi_j$ of $\Delta_M$ and $\Delta_v$, respectively.
\end{proposition}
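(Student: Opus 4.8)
The plan is to exploit the two facts recalled just above: the operator identity $\Delta_t=\Delta_M+\left(\tfrac{1}{t^2}-1\right)\Delta_v$ from \cite[Prop 5.3]{bbb}, together with the existence, guaranteed by the totally geodesic fibers via \cite[Thm 3.6]{bbb}, of a Hilbert basis $\{\psi_n\}$ of $L^2(M)$ whose elements are simultaneous eigenfunctions of $\Delta_M$ and $\Delta_v$. First I would fix such a common eigenfunction $\psi_n$, say with $\Delta_M\psi_n=\mu_k\,\psi_n$ and $\Delta_v\psi_n=\phi_j\,\psi_n$ for suitable indices $k,j$ as in \eqref{spectra}. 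Applying the operator identity directly gives
\begin{equation*}
\Delta_t\psi_n=\Delta_M\psi_n+\left(\tfrac{1}{t^2}-1\right)\Delta_v\psi_n=\left(\mu_k+\left(\tfrac{1}{t^2}-1\right)\phi_j\right)\psi_n,
\end{equation*}
so that $\psi_n$ is an eigenfunction of $\Delta_t$ with eigenvalue exactly $\lambda^{k,j}(t)=\mu_k+\left(\tfrac{1}{t^2}-1\right)\phi_j$, which is the form asserted in \eqref{eq:lambdakj}.

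The second step is to upgrade this from a statement about individual basis elements to a statement about the whole spectrum. Since $\{\psi_n\}$ is a \emph{complete} orthonormal system in $L^2(M)$ and each $\psi_n$ is an eigenfunction of $\Delta_t$, the operator $\Delta_t$ is diagonalized in this basis. Being the Laplacian of the metric $g_t$, the operator $\Delta_t$ is elliptic and self-adjoint with discrete spectrum; hence every eigenvalue of $\Delta_t$ must be attained on some $\psi_n$, and therefore equals $\mu_k+\left(\tfrac{1}{t^2}-1\right)\phi_j$ for the pair $(k,j)$ associated to that basis element. This yields the containment
\begin{equation*}
\spec(\Delta_t)\subset\Big\{\mu_k+\left(\tfrac{1}{t^2}-1\right)\phi_j : k,j\Big\}=\spec(\Delta_M)+\left(\tfrac{1}{t^2}-1\right)\spec(\Delta_v),
\end{equation*}
the right-hand side being precisely the Minkowski sum in the statement; discreteness of the two spectra in \eqref{spectra} guarantees this set is discrete, so each eigenvalue $\lambda(t)$ admits the explicit expression \eqref{eq:lambdakj}.

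The only genuine subtlety---and the step I would treat most carefully---is the functional-analytic bookkeeping ensuring that \emph{no} part of $\spec(\Delta_t)$ escapes the common eigenbasis. This is precisely where completeness of $\{\psi_n\}$ as a Hilbert basis, rather than mere existence of simultaneous eigenfunctions, is essential: it rules out any eigenvalue of $\Delta_t$ supported on a vector orthogonal to every $\psi_n$, and the discreteness (no continuous spectrum) follows from ellipticity of $\Delta_t$ on the closed manifold $M$. I would also note that the inclusion may well be \emph{strict}, since a given pair $(\mu_k,\phi_j)$ contributes to $\spec(\Delta_t)$ only when the corresponding simultaneous eigenspace is nontrivial: the Minkowski sum on the right records all formal sums, whereas $\spec(\Delta_t)$ sees only the realized ones. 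This asymmetry is harmless for the stated inclusion, but is worth flagging, as it is exactly what makes the later analysis of which eigenvalues cross $\tfrac{\scal(g_t)}{m-1}$ nontrivial.
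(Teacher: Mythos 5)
Your argument is correct and is essentially the intended one: the paper itself only cites \cite[Cor 3.6]{bp} for this proposition, but the two ingredients you use (the simultaneous Hilbert eigenbasis from \cite[Thm 3.6]{bbb} and the identity $\Delta_t=\Delta_M+(\tfrac{1}{t^2}-1)\Delta_v$ from \cite[Prop 5.3]{bbb}) are exactly the facts recalled immediately before the statement, and your completeness argument correctly rules out eigenvalues outside the common eigenbasis. Your closing remark that the inclusion is generally strict also matches the paper's own remark following the proposition.
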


\begin{proof}
See \cite[Cor 3.6]{bp}.
\end{proof}

\begin{remark}
Not all possible combinations of $\mu_k$ and $\phi_j$ in \eqref{eq:lambdakj} give rise to an eigenvalue of $\Delta_t$. In fact, this only happens when the total space of the submersion is a Riemannian product. In general, determining which combinations are allowed depends on the global geometry of the submersion.
\end{remark}

Note that, since the fibers of $\pi$ remain totally geodesic with respect to $g_t$, \eqref{eq:specincl} holds when $\Delta_M$ is replaced with $\Delta_t$, i.e.,
\begin{equation}\label{eq:specincl2}
\spec(\Delta_B)\subset\spec(\Delta_t), \quad\mbox{ for all } t>0.
\end{equation}
Moreover, when $j=0$ in \eqref{eq:lambdakj}, $\lambda^{k,0}(t)=\mu_k\in\spec(\Delta_M)$ remains an eigenvalue of $\Delta_t$ for $t\neq1$ if and only if $\mu_k\in\spec(\Delta_B)$. Such eigenvalues $\lambda^{k,0}(t)$ of $\Delta_t$ will be called \emph{constant eigenvalues}, since they are independent of $t$. In other words, the constant eigenvalues of $\Delta_t$ are the ones in the left hand side of \eqref{eq:specincl2}. We stress that $\lambda^{k,0}(t)$ is not necessarily a constant eigenvalue for all $k$.


\section{Bifurcation on homogeneous fibrations}\label{sec:bifhom}
Let $H\subsetneq K\subsetneq G$ be compact connected Lie groups, such that $\dim K/H\geq 2$, and assume additionally that
either $H$ is normal in $K$, or $K$ is normal in $G$. Consider the homogeneous fibration \eqref{eq:homfib},
\begin{equation*}
K/H\longrightarrow G/H\stackrel{\pi}{\longrightarrow} G/K, \quad \pi(gH)=gK,
\end{equation*}
and notice that the fiber over $gK \in G/K$ is $(gK)H\subset G/H$. Define a $K$-action on $G/H$ by $k\cdot gH=kgH$ if $K$ is normal in $G$ and by $k\cdot gH=gk^{-1}H$ if $H$ is normal in $K$. Notice that the orbits of this $K$-action are exactly the fibers of $\pi$.
Denote by $\mathfrak h\subsetneq\mathfrak k\subsetneq\mathfrak g$ the Lie algebras of $H\subsetneq K\subsetneq G$. We henceforth fix an $\Ad_G(K)$-invariant complement $\mathfrak m$ to $\mathfrak k$ in $\mathfrak g$, and an $\Ad_G(H)$-invariant complement $\mathfrak p$ to $\mathfrak h$ in $\mathfrak k$, i.e.,
\begin{equation*}
\mathfrak k\oplus\mathfrak m=\mathfrak g, \quad [\mathfrak k,\mathfrak m]\subset\mathfrak m \quad \mbox{ and } \quad \mathfrak h\oplus\mathfrak p=\mathfrak k, \quad [\mathfrak h,\mathfrak p]\subset\mathfrak p.
\end{equation*}
There are natural identifications of $\mathfrak m$ and $\mathfrak p$ with the tangent spaces to $G/K$ and $K/H$ at the origin,\footnote{i.e., $\mathfrak m\cong T_{(eK)}G/K$ and $\mathfrak p\cong T_{(eH)}K/H$.} respectively. The sum $\mathfrak m\oplus\mathfrak p$ is an $\Ad_G(H)$-invariant complement to $\mathfrak h$ in $\mathfrak g$, which is identified with the tangent space to $G/H$ at the origin.

Any $\Ad_G(K)$-invariant inner product on $\mathfrak m$ defines a $G$-invariant metric on the base $G/K$; and any $\Ad_G(H)$-invariant inner product on $\mathfrak p$ defines a $K$-invariant metric on the fiber $K/H$. The orthogonal direct sum of these inner products on $\mathfrak m\oplus\mathfrak p$ now gives a $G$-invariant metric on $G/H$. We call these metrics on \eqref{eq:homfib} \emph{compatible} homogeneous metrics. We stress that not necessarily all $G$-invariant metrics on $G/H$ arise in this way. However, if $\mathfrak m$ and $\mathfrak p$ do not share equivalent $\Ad_G(H)$-submodules, then all $G$-invariant metrics on $G/H$ that project to a $G$-invariant metric on $G/K$ are of this form. We will henceforth consider all homogeneous fibrations to be endowed with compatible metrics. The homogeneous fibration $\pi\colon G/H\to G/K$ is then automatically a Riemannian submersion with totally geodesic fibers (isometric to $K/H$), see \cite[Prop 2]{bb} or \cite[Thm 9.80]{besse}.

\begin{theorem}\label{thm:bifhom}
Let $K/H\to G/H\to G/K$ be a homogeneous fibration as above, and assume that $K/H$ has positive scalar curvature. Let $g_t$ be the family of homogeneous metrics on $G/H$ obtained by scaling the fibers by $t^2$. Then, there exists a sequence $\{t_q\}$ in $]0,1[$, that converges to $0$, of bifurcation values for the family $g_t$.
\end{theorem}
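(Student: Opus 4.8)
The plan is to apply the equivariant bifurcation criterion of Proposition~\ref{prop:equivbif} to the $K$-action on $G/H$, whose orbits are exactly the fibers of $\pi$ and which is isometric for every $g_t$, using the trivial $K$-isotypic component of the negative isotropic representation as the invariant that detects bifurcation. The guiding point is that the plain Morse index criterion of Proposition~\ref{prop:bifmorseindex} is insufficient here, precisely because of the compensation phenomenon: when an eigenvalue lifted from the base crosses the degeneracy threshold $\tfrac{\scal(g_t)}{m-1}$, a non-constant eigenvalue may cross in the opposite direction and leave the Morse index unchanged. The equivariant refinement circumvents this because non-constant eigenfunctions contribute nothing to the \emph{trivial} part. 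To set this up, I would first record that, since $g_t$ is the canonical variation of a submersion with totally geodesic fibers, $\scal(g_t)$ is constant on $G/H$ and equals $\scal_B+\tfrac{1}{t^2}\scal_F-t^2|A|^2$, where $\scal_B,\scal_F$ are the scalar curvatures of base and fiber and $|A|^2$ is the norm of the O'Neill integrability tensor (see \cite{bbb,besse}). The hypothesis $\scal_F>0$ then forces $\scal(g_t)\to+\infty$ as $t\to0$ (indeed $\scal(g_t)$ is strictly decreasing in $t$), so the threshold $\tfrac{\scal(g_t)}{m-1}$ tends to $+\infty$; this is the only place where the positivity of $\scal_F$, and hence the dimension hypothesis $\dim K/H\geq2$, is used.

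Second, I would identify the trivial isotypic component of $\pi^-_t$. For each negative eigenvalue of the Jacobi operator, i.e.\ each positive eigenvalue $\lambda$ of $\Delta_t$ with $\lambda<\tfrac{\scal(g_t)}{m-1}$, the $K$-fixed vectors in the eigenspace $E_t^\lambda$ are exactly the functions constant along the $K$-orbits, that is, constant along the fibers. By Proposition~\ref{prop:eigenlift} (valid for $g_t$ since the fibers remain totally geodesic), these are precisely the lifts of $\lambda$-eigenfunctions of $\Delta_B$. Consequently the multiplicity $\nu(t)$ of the trivial representation in $\pi^-_t$ equals the number of \emph{positive} eigenvalues of $\Delta_B$ lying below $\tfrac{\scal(g_t)}{m-1}$, counted with multiplicity, and no non-constant eigenvalue can ever contribute to it. Using the inclusion \eqref{eq:specincl2} and the fact that the base $G/K$ is positive-dimensional (so $\Delta_B$ has infinitely many eigenvalues), the asymptotics above give $\nu(t)\to+\infty$ as $t\to0$.

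Finally I would harvest the bifurcations. The crossing times $t_\mu$, defined by $\tfrac{\scal(g_{t_\mu})}{m-1}=\mu$ as $\mu$ runs over the positive eigenvalues of $\Delta_B$, form a sequence accumulating at $0$, and $\nu$ jumps at each of them. Around such a crossing I would pick $a<t_\mu<b$ that are \emph{not} degeneracy values: this is possible because, by Proposition~\ref{cor:specincl} and the explicit form \eqref{eq:lambdakj}, on any compact subinterval of $\left]0,1\right[$ only finitely many eigenvalue branches $\lambda^{k,j}(t)$ can meet the threshold, so the degeneracy values are locally finite. Taking $a,b$ close enough to isolate the single crossing gives $\nu(a)\neq\nu(b)$, whence the trivial isotypic components of $\pi^-_a$ and $\pi^-_b$ have different dimensions and these representations are not isomorphic. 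Since $K$ is compact and connected, hence nice, Proposition~\ref{prop:equivbif} yields a bifurcation value $t_q\in\left]a,b\right[$; letting $\mu\to+\infty$ produces the desired sequence $t_q\to0$.

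The main obstacle is not a computation but the structural step that makes the whole argument go through: showing that $\nu(t)$ is insensitive to the uncontrolled non-constant part of the spectrum. Everything rests on the clean identification, through Proposition~\ref{prop:eigenlift}, of $K$-invariant eigenfunctions with lifts from the base. Granting this, the growth $\nu(t)\to+\infty$ is immediate from the scalar curvature asymptotics, and the equivariant criterion of \cite{smwas} converts each jump of $\nu$ into a genuine bifurcation value, turning the otherwise fatal compensation problem into a non-issue.
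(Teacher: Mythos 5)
Your proposal is correct and follows essentially the same route as the paper: the scalar curvature asymptotics from the O'Neill formula produce infinitely many crossings of base eigenvalues, and the compensation problem is defeated by tracking the multiplicity of the trivial $K$-isotypic component of $\pi^-_t$, which via Proposition~\ref{prop:eigenlift} counts only eigenvalues lifted from $\Delta_B$ and therefore jumps at each crossing. The only cosmetic difference is that the paper first notes that a Morse index change would already suffice and invokes the equivariant criterion only to handle possible compensation, whereas you go directly to the trivial-isotypic invariant; the mathematical content is identical.
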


\begin{proof}
Since $\pi\colon(G/H,g_t)\to G/K$ is a Riemannian submersion with totally geodesic fibers,
\begin{equation}\label{eq:scal}
\scal(G/H,g_t)=\tfrac{1}{t^2}\scal(K/H)+\scal(G/K)\circ\pi-t^2\|A\|^2,
\end{equation}
where $\|A\|$ is the Hilbert-Schmidt norm of the Gray-O'Neill tensor of integrability of the horizontal distribution (see~\cite[Prop 9.70]{besse}). Note that $\scal(K/H)>0$ implies
\begin{equation}\label{eq:limit}
\lim_{t\to0_{+}}{\scal(G/H,g_t)}=+\infty.
\end{equation}

Recall that the \emph{degeneracy values} in this setup are those $t>0$ such that
\begin{equation}
(m-1)\Delta_{t} \psi -\scal(G/H,g_{t})\psi=0
\end{equation}
has a non-trivial solution $\psi$, where $m=\dim G/H$ and $\Delta_t$ is the Laplacian of $(G/H,g_t)$. From \eqref{eq:lambdakj} and \eqref{eq:scal}, the set of such degeneracy values is discrete. From \eqref{eq:limit}, there are infinitely many degeneracy values $t_q$ accumulating at $0$ such that $\scal(G/H,g_{t_q})/(m-1)\in\spec(\Delta_{G/K})\subset\spec(\Delta_t)$, see \eqref{eq:specincl2}. We claim that every such value $t_q$ is a bifurcation value.

Fix one such $t_q$, and denote by $\lambda\in\spec(\Delta_{G/K})$ the constant eigenvalue of $\Delta_{t}$ such that $\scal(G/H,g_{t_q})/(m-1)=\lambda$. If there is a change in the Morse index at $t_q$, i.e., for $\varepsilon>0$ sufficiently small, $N(g_{t_q-\varepsilon})\neq N(g_{t_q+\varepsilon})$, then by Proposition~\ref{prop:bifmorseindex}, $t_q$ is a bifurcation value. However, if the Morse index does not change, there must be a \emph{compensation} of eigenvalues. Namely, there must exist non-constant eigenvalues $\lambda^{k_1,j_1}(t),\dots,\lambda^{k_n,j_n}(t)$ of $\Delta_t$, whose combined multiplicity equals the multiplicity of $\lambda$, such that
\begin{equation*}
\begin{aligned}
\lambda<\scal(G/H,g_t)/(m-1)<\lambda^{k_i,j_i}(t), \quad & \mbox{ for all } t<t_q \mbox{ (close to } t_q) \mbox{ and } 1\leq i\leq n, \\
\lambda>\scal(G/H,g_t)/(m-1)>\lambda^{k_i,j_i}(t), \quad & \mbox{ for all } t>t_q \mbox{ (close to } t_q) \mbox{ and } 1\leq i\leq n. \\
\end{aligned}
\end{equation*}
Denoting by $E_t^\alpha$ the eigenspace of the eigenvalue $\alpha\in\spec(\Delta_t)$, we have the negative isotropic representations $\pi^-_t$ on the linear spaces (of same finite dimension):
\begin{equation}\label{eq:negspaces}
\begin{aligned}
&E\oplus E^\lambda_t,  &\mbox{ for } t<t_q \mbox{ (close to } t_q), \\
&E\oplus \bigoplus_i E^{\lambda^{k_i,j_i}}_t,   &\mbox{ for } t>t_q \mbox{ (close to } t_q), \\
\end{aligned}
\end{equation}
where $E$ is the space spanned by the eigenfunctions with eigenvalues less than $\scal(G/H,g_t)/(m-1)$ for $t$ close to $t_q$. We claim that for small $\varepsilon>0$, the negative isotropic representations $\pi_{t_q-\varepsilon}^-$ and $\pi_{t_q+\varepsilon}^-$ on the spaces \eqref{eq:negspaces} \emph{cannot be isomorphic}. From Proposition~\ref{prop:equivbif}, it then follows that $t_q$ is a bifurcation value, concluding the proof.

Let us verify the above claim. Given a representation $\pi$ of a compact group, denote by $\mathfrak I(\pi)$ the number of copies of the trivial representation in the irreducible decomposition of $\pi$. It is easily seen that a necessary condition for the two representations $\pi_a$ and $\pi_b$ to be isomorphic is that $\mathfrak I(\pi_a)=\mathfrak I(\pi_b)$. For the negative isotropic representation $\pi_t^-$, one can compute
\begin{equation}\label{eq:auxidentity}
\mathfrak I(\pi_t^-)=\mathop{\sum_{\eta\in\spec(\Delta_{G/K})}}_{\eta<\frac{\scal(g_{t})}{m-1}} \mathrm{mul}(\eta),
\end{equation}
where $\mathrm{mul}(\eta)$ is the multiplicity of $\eta$ as an eigenvalue of $\Delta_{G/K}$. Indeed, an eigenfunction $\psi$ of $\Delta_t$ is constant along the fibers $K/H$ of the homogeneous fibration \eqref{eq:homfib} if and only if it is $K$-invariant, i.e., $\psi\circ k=\psi$ for all $k\in K$. This is equivalent to saying that $\psi$ is a fixed point of $\pi_t^-$, see \eqref{eq:repr}. So, from Proposition~\ref{prop:eigenlift}, the left hand side of \eqref{eq:auxidentity} is greater than or equal to the right hand side. Conversely, it is easy to see that if $\psi\colon G/H\to\R$ is the linear combination of eigenfunctions $\psi_i\colon G/H\to\R$ of $\Delta_t$, and if $\psi$ is constant along the fibers of $G/H\to G/K$, then each $\psi_i$ must be constant along such fibers. This follows from the fact that the subspace of $L^2(G/H)$ of functions that are constant along the fibers (which is isomorphic to $L^2(G/K)$), is spanned by the (lift of) eigenfunctions of $\Delta_{G/K}$. In other words, the space spanned by the eigenfunctions of $\Delta_{G/H}$ that are constant along the fibers and the space spanned by the eigenfunctions of $\Delta_{G/H}$ that are not constant along the fibers are $L^2$-orthogonal. This completes the proof of \eqref{eq:auxidentity}.

From \eqref{eq:negspaces} and \eqref{eq:auxidentity} we see that, for any $\varepsilon>0$ small, $\mathfrak I(\pi_{t_q-\varepsilon}^-)>\mathfrak I(\pi_{t_q+\varepsilon}^-)$. Therefore these representations are not isomorphic, concluding the proof.
\end{proof}

\begin{remark}
At all bifurcation values for the family $g_t$ a \emph{break of symmetry} occurs, in the sense that any solutions in the bifurcating branch are not $G$-homogeneous. This follows easily from the fact that each conformal class contains at most one homogeneous metric (up to rescaling).
\end{remark}

\subsection{Sharpness of fiber hypotheses}
If the fibers $K/H$ have flat scalar curvature or have dimension $1$, then $\scal(g_t)$ remains bounded as $t\to 0$, and there are not infinitely many degeneracy values as above. For instance, consider a fibration of tori, $G=K\times K$, $K=T^2$ and $H=\{e\}$, where the inclusion of $K$ is as one of the factors of $G$. If $K$ is endowed with the flat metric and $G$ with the product metric, shrinking the fibers keeps the total space $G/H$ flat, hence the family obtained is (trivially) locally rigid, for all $t>0$.

\section{Examples}\label{sec:metrics}

We now discuss how to construct examples of homogeneous fibrations with fibers of positive scalar curvature to which Theorem~\ref{thm:bifhom} (hence also Theorem~\ref{thm:main}) applies.

\subsection{Normal homogeneous metrics}
A $K$-invariant metric on $K/H$ is called \emph{normal} if it is obtained from the restriction to $\mathfrak p$ of a bi-invariant inner product on $\mathfrak k$. Since $K$ is compact, it admits a bi-invariant metric. Hence, normal homogeneous metrics always exist\footnote{There exist homogeneous spaces on which \emph{all} homogeneous metrics are normal. These spaces are called of \emph{normal type} in \cite[\S 7]{bb}. Examples are spaces whose isotropy representation is irreducible (e.g., irreducible symmetric spaces) and their products.} on $K/H$. Endowed with such a metric, the sectional curvature of a tangent plane at the origin, spanned by orthonormal vectors $X$ and $Y$, is
\begin{equation}
\sec(X,Y)=\tfrac14\big\|\big[\overline X,\overline Y\big]\big\|^2+\tfrac34\big\|\big[\overline X,\overline Y\big]_\mathfrak h\big\|^2\geq0,
\end{equation}
where $\overline X=(0,X)$ and $\overline Y=(0,Y)$ are the horizontal lifts of $X$ and $Y$ to $\mathfrak k=\mathfrak h\oplus\mathfrak p$ and $(\cdot)_\mathfrak h$ denotes the $\mathfrak h$-component of a vector in $\mathfrak k$. In particular, $\scal(K/H)\geq 0$ and it is equal to zero if and only if $\mathfrak p$ is an abelian ideal of $\mathfrak k$. This, in turn, is equivalent to the existence of an abelian subgroup $A\subset K$ that acts transitively on $K/H$. Since (the closure of) $A$ is a compact connected abelian Lie group, it must be a torus. Hence, $K/H$ itself is a torus. Thus, \emph{any} normal metric on $K/H$ has positive scalar curvature, unless $K/H$ is a torus.

More generally, $K/H$ admits a (normal) metric of positive scalar curvature if and only if its universal covering is not diffeomorphic to an Euclidean space, see \cite[Thm 2]{bb}. In particular, if $K/H$ is not an aspherical manifold, then $K/H$ has a metric of positive scalar curvature. We also stress that, generally, there are other $K$-invariant metrics (not necessarily normal) on $K/H$ that have positive scalar curvature, and any such metric can be used to obtain examples of applications of our results. In this direction, examples with non-normal homogeneous metrics with positive scalar curvature on spheres will be discussed below.

In any of the above cases, one can endow the remaining spaces of \eqref{eq:homfib} with compatible homogeneous metrics. In this way, it is possible to construct many classes of homogeneous fibrations to which our results apply.

\subsection{Twisted products}\label{sec:twist}
Let us now describe explicit triples $H\subsetneq K\subsetneq G$ of compact connected Lie groups with either $H$ normal in $K$ or $K$ normal in $G$. Starting with the latter, if a compact connected Lie group $G$ has a proper connected normal subgroup $K$, then there exists another connected normal subgroup\footnote{The subgroup $L$ is obtained as the connected subgroup of $G$ whose Lie algebra $\mathfrak l$ is the orthogonal complement (with respect to a bi-invariant metric) of the Lie algebra $\mathfrak k$ of $K$. Since $K$ is normal, $\mathfrak k$ is an ideal and then $\mathfrak l=\mathfrak k^\perp$ is also an ideal, because the adjoint representation is skew-symmetric with respect to the bi-invariant metric. This implies that $L$ is normal, and $KL$ generates the entire group $G$ by connectedness, since it does so locally near the identity. Finiteness of $\Gamma=K\cap L$ follows since the intersection $\mathfrak k\cap\mathfrak l$ is trivial.} $L$ of $G$ such that $G=(K\times L)/\Gamma$, where $\Gamma\subset K\times L$ is finite. For any subgroup $H$ of $K$, one then gets the homogeneous fibration
\begin{equation}\label{eq:twistedprod}
K/H\longrightarrow ((K\times L)/\Gamma)/H \longrightarrow G/K.
\end{equation}
This provides an algorithm to build examples, whose input are the groups $H$, $K$, $L$ and $\Gamma$. Setting $G=(K\times L)/\Gamma$ we then have that the factor $K$ is a normal subgroup.

\begin{example}\label{ex:so4}
For instance, consider $G=\SO(4)$, which is double-covered by $S^3\times S^3$. In this case, $K=L=S^3$ and $\Gamma=\Z_2$ is the diagonal embedding into $K\times L$, i.e., $\Gamma$ is the subgroup generated by $(-1,-1)\in S^3\times S^3$. We then have $G=(K\times L)/\Gamma=(S^3\times S^3)/\Z_2$, and the quotient $G/K$ is isomorphic to $S^3/\Z_2=\SO(3)$. One can choose $H\subset K=S^3$ to be trivial, so that $K/H=S^3$; or to be a circle, e.g., the circle that gives the Hopf action on $S^3$, so that $K/H=S^2$. The corresponding homogeneous fibrations that \eqref{eq:twistedprod} gives are:
\begin{equation*}
S^3\to \SO(4)\to \SO(3) \quad\mbox{ and } \quad S^2\to \SO(4)/S^1\to \SO(3).
\end{equation*}
\end{example}

In the above cases, the total space $G/H$ is a \emph{twisted product}; while in the special case where $\Gamma$ is trivial, we have the splitting $G=K\times L$, so that $G/K=L$ and $G/H=K/H\times L$ is an actual \emph{product} manifold, i.e., \eqref{eq:twistedprod} becomes
\begin{equation}\label{eq:product}
K/H\longrightarrow K/H\times L\longrightarrow L.
\end{equation}
Differently from the above cases, the deformed metrics $g_t$ in this situation are product metrics, obtained by rescaling the directions tangent to the first factor $K/H$ by $t^2$ and keeping the metric constant in the directions tangent to $L$. In this way, any product of a compact homogeneous space (with positive scalar curvature) and a compact connected Lie group satisfies the hypotheses of Theorem~\ref{thm:bifhom} (hence also of Theorem~\ref{thm:main}). We note that this particular case is covered by the results of \cite{lpz}.

\subsection{Sphere fibers}
An important observation is that in the above product situation where $K/H=S^n$ is a sphere, our result allows for \emph{any} homogeneous metric (not necessarily normal) on $K/H=S^n$ whose scalar curvature is positive, as opposed to only the round metric. Homogeneous metrics on spheres were classified by Ziller~\cite{ziller} in 1982. They are obtained by rescaling the fibers of one of the Hopf fibrations:
\begin{equation*}
S^1\to S^{2n+1}\to\C P^n, \quad S^3\to S^{4n+3}\to\Hr P^n, \quad S^7\to S^{15}\to S^8\left(\tfrac12\right).
\end{equation*}
In the first and last case, there is only one direction in which the fibers can be rescaled, while for the fibration with $S^3$ fiber, each of the $3$ coordinate $S^1$ subgroups can be rescaled with a different factor. This gives rise to the metrics:
\begin{itemize}
\item $\g_s$, a $1$-parameter family of $\U(n+1)$-invariant metrics on $S^{2n+1}$;
\item $\h_{s_1,s_2,s_3}$, a $3$-parameter family of $\Sp(n+1)$-invariant metrics on $S^{4n+3}$;
\item $\k_s$, a $1$-parameter family of $\Spin(9)$-invariant metrics on $S^{15}$.
\end{itemize}
All the above metrics have $\scal>0$ for a certain range of parameters (see table below). Thus, if $K/H=S^n$ is isometric to one of those spheres, the submersion \eqref{eq:product} satisfies the hypotheses of Theorem~\ref{thm:bifhom} (hence also of Theorem~\ref{thm:main}).

\begin{center}
\begin{tabular*}{\textwidth}{@{\extracolsep{\fill}} l l l l}
\hline
metric & $K$ \rule[-1.2ex]{0pt}{0pt} \rule{0pt}{2.5ex} & $H$ & $\dim K/H$ \\
\hline
\hline
\multirow{3}{*}{round} \rule{0pt}{2.5ex} & $\SO(n+1)$ & $\SO(n)$ & $n\, (\geq 2)$ \\
& $\Spin(7)$ & $G_2$ & $7$ \\
& $G_2$ \rule[-1.2ex]{0pt}{0pt}  & $\SU(3)$ & $6$ \\
\hline
\multirow{2}{*}{$\g_s$} \rule{0pt}{2.5ex} & $\SU(n+1)$ & $\SU(n)$ & $2n+1\, (\geq 2)$ \\
& $\U(n+1)$ & $\U(n)$ \rule[-1.2ex]{0pt}{0pt}  & $2n+1\, (\geq 2)$ \\
\hline
\multirow{3}{*}{$\h_{s_1,s_2,s_3}$} \rule{0pt}{2.5ex} & $\Sp(n+1)$ & $\Sp(n)$ & $4n+3$ \\
& $\Sp(n+1)\times\Sp(1) $ & $\Sp(n)\times\Sp(1)$ & $4n+3$ \\
& $\Sp(n+1)\times\U(1)$ \rule[-1.2ex]{0pt}{0pt}  & $\Sp(n)\times\U(1)$ & $4n+3$ \\
\hline
$\k_s$ \rule[-1.2ex]{0pt}{0pt} & $\Spin(9)$ & $\Spin(7)$ & $15$ \\
\hline
\end{tabular*}
\end{center}
\smallskip
The range of parameters for which the above $K$-invariant metrics have $\scal>0$ is $0<s<s_{max}$, where (see \cite[Prop 4.2]{bp}):
\begin{center}
\begin{tabular*}{\textwidth}{@{\extracolsep{\fill}} lccc}
\hline
& $\g_s$ \rule[-1.2ex]{0pt}{0pt} & $\h_{s,s,s}$ & $\k_s$ \\
\hline
\hline
$s_{max}$ \rule{0pt}{4.0ex} \rule[-2ex]{0pt}{0pt}  & $\sqrt{2n+2}$ & $\sqrt{\tfrac{2n+4}{3}+\tfrac{\sqrt{18n+16(n^2+2n)^2}}{6n}}$ & $\sqrt{2+\tfrac{\sqrt{19}}{2}}$ \\
\hline
\end{tabular*}
\end{center}
When $s=1$, the above metrics are isometric to the round metric (which is the only normal homogeneous metric in each family).

\begin{remark}
The above construction can be interpreted as having a chain $$H\subsetneq H'\subsetneq K\subsetneq G$$ of Lie groups, and first performing a Cheeger deformation with respect to the $H'$-action in the total space of $H'/H\to K/H\to K/H'$ to obtain positive scalar curvature on $K/H$; and then using this metric on the fiber of $K/H\to G/H\to G/K$. The Cheeger deformation with respect to the $K$-action on $G/H$ gives the desired $1$-parameter family $g_t$ that satisfies the hypotheses of our results. More generally, one could perform multiple ``preliminary'' Cheeger deformations with a longer chain of groups in order to gain $\scal>0$ on the fibers of the ``last'' homogeneous fibration.
\end{remark}

\begin{example}
Another interesting class of examples with $K$ normal in $G$ is when $H$ is trivial, so that the resulting homogeneous fibration \eqref{eq:homfib} is a short exact sequence of Lie groups $K\to G\to G/K$. Note this is precisely the case of $S^3\to \SO(4)\to \SO(3)$ in Example~\ref{ex:so4}. Here, the deformed metrics $g_t$ are obtained by shrinking the original metric in the direction of the cosets of $K$ in $G$.
\end{example}

\subsection{Other examples}
As explained above, instead of having $K$ normal in $G$, one can also consider the case where $H$ is normal in $K$. This poses far less restrictions on the homogeneous fibrations that can be obtained, since the group $K$ will split as a product (up to a finite quotient), however no conditions are imposed on $G$. For instance, $G$ may have arbitrarily large dimension and rank. It follows from the above discussion on normal homogenous metrics that our results apply to the submersion \eqref{eq:homfib} with $H$ normal in $K$ as soon as the quotient $K/H$ is not abelian. More precisely, if $K/H$ is not a torus, then any normal homogeneous metric will have positive scalar curvature. Any choice of $G$ will then yield a triple $H\subsetneq K\subsetneq G$ whose corresponding homogeneous fibration can be endowed with compatible metrics for which Theorem~\ref{thm:bifhom} (hence also Theorem~\ref{thm:main}) applies.

\begin{example}
To illustrate the above comments, let us build on the case described in Example~\ref{ex:so4}. Instead of $G$, set $K=\SO(4)=(S^3\times S^3)/\Z_2$ and $H$ as one of the $S^3$ factors, so that $K/H=\SO(3)$. Then $G$ can be chosen arbitrarily among compact connected Lie groups that have a subgroup isomorphic to $\SO(4)$. As concrete examples, we may set $G=\SO(5)$ so that $G/K=S^4$ is a sphere; or $G=\SO(6)$ so that $G/K=T_1 S^4$ is the unit tangent bundle of $S^4$. The corresponding homogeneous fibrations are
\begin{equation*}
\SO(3)\to \SO(5)/S^3 \to S^4 \quad\mbox{ and } \quad \SO(3)\to \SO(6)/S^3\to T_1 S^4.
\end{equation*}
\end{example}

\begin{remark}
Any of the above examples can be trivially used to obtain new ones with non-simply-connected total space. Consider $F\subset K$ a finite subgroup and its action on $K/H$ and $G/H$, so that the inclusion map $K/H\to G/H$ is equivariant. One can form a new fibration replacing $K/H$ and $G/H$ by their (non-simply-connected) quotients by the $F$-action. Since $F\subset K$, the base of the fibration remains $G/K$. If the original metrics satisfied the conditions of Theorem~\ref{thm:bifhom}, then the induced metrics in the new fibration also do.
\end{remark}


\section{Bifurcation on non-homogeneous fibrations}\label{sec:curv}

A natural question is how the presence of many symmetries affects the bifurcation result obtained above. Homogeneity played a pivotal role in employing the equivariant bifurcation criterion (Proposition~\ref{prop:equivbif}). When this assumption is dropped, the only tool at hand is the Morse index criterion (Proposition~\ref{prop:bifmorseindex}), so extra hypotheses are needed to guarantee a change in the Morse index at the degeneracy values. One such possibility is to impose certain curvature conditions that allow to bound (from below) the growth of the eigenvalues of a non-homogeneous collapsing Riemannian submersion.

\begin{theorem}\label{thm:bifcurv}
Let $F\to M\to B$ be a Riemannian submersion with totally geodesic fibers and  $l=\dim F\geq 2$, $m=\dim M$. Assume the metrics $g_t$ obtained by shrinking the fibers have constant scalar curvature, and that for some $\tau>0$ and $k_1,k_2>0$,
\begin{equation*}
\left\{\begin{matrix}
\Ric_F \!\!\!\!& \geq &\!\!\!\! (l-1)\,k_1 \\
\scal_F \!\!\!\!&<&\!\!\!\! l(m-1)\,k_1
\end{matrix} \right.
\quad\quad\mbox{and}\quad\quad
\left\{\begin{matrix}
\Ric_{(M,g_{\tau})} \!\!\!\!&\geq & \!\!\!\!(m-1)\,k_2 \\
\scal_B \!\!\!\!&\leq &\!\!\!\! m(m-1)\,k_2.
\end{matrix} \right.
\end{equation*}
Then, there exists a sequence $\{t_q\}$ in $]0,\tau[$, that converges to $0$, of bifurcation values for the family $g_t$.
\end{theorem}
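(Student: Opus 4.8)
The plan is to verify the Morse-index change hypothesis of Proposition~\ref{prop:bifmorseindex} at infinitely many degeneracy values accumulating at $0$. As in Theorem~\ref{thm:bifhom}, the starting observation is that $\Ric_F\ge(l-1)\,k_1>0$ forces $\scal_F\ge l(l-1)\,k_1>0$, so by the scalar curvature formula \eqref{eq:scal} the threshold $\Lambda(t):=\scal(g_t)/(m-1)$ satisfies $\Lambda(t)\to+\infty$ as $t\to0$. Since $\spec(\Delta_B)\subset\spec(\Delta_t)$ by \eqref{eq:specincl2}, the increasing threshold $\Lambda(t)$ crosses infinitely many constant eigenvalues, producing degeneracy values $t_q\to0$. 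The genuine difficulty is exactly the \emph{compensation} problem emphasized in the introduction: one must show that the Morse index truly jumps at these $t_q$, i.e.\ that no nonconstant eigenvalue $\lambda^{k,j}(t)$ simultaneously crosses $\Lambda(t)$ in the opposite direction.

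The curvature hypotheses are tailored to defeat compensation through sharp Lichnerowicz eigenvalue estimates. First I would apply Lichnerowicz's theorem on the fiber: $\Ric_F\ge(l-1)\,k_1$ yields $\phi_1\ge l\,k_1$ for the first nonzero eigenvalue of $\Delta_v$, and the assumption $\scal_F<l(m-1)\,k_1$ then produces the strict gap $\tfrac{\scal_F}{m-1}<l\,k_1\le\phi_1$. Because every nonconstant eigenvalue obeys $\lambda^{k,j}(t)=\mu_k+(\tfrac1{t^2}-1)\phi_j\ge(\tfrac1{t^2}-1)\phi_1$ for $t<1$ (using $\mu_k\ge0$ and $\phi_j\ge\phi_1$), whereas $\Lambda(t)\sim\tfrac{\scal_F}{(m-1)\,t^2}$, this gap forces $(\tfrac1{t^2}-1)\phi_1>\Lambda(t)$ for all sufficiently small $t$. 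Hence there is $t_0\in\,]0,\tau]$ such that on $\,]0,t_0[$ \emph{no} nonconstant eigenvalue lies below the threshold. The second pair of hypotheses enters through Lichnerowicz on the total space: $\Ric_{(M,g_\tau)}\ge(m-1)\,k_2$ gives $\lambda_1^{g_\tau}\ge m\,k_2$, and since the nonconstant eigenvalues increase as $t$ decreases, one gets the uniform bound $\lambda_1^{g_t}\ge m\,k_2$ on $\,]0,\tau]$; together with $\scal_B\le m(m-1)\,k_2$ this anchors the clean spectral picture at the reference parameter $\tau$, at which the Ricci bound is imposed.

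On $\,]0,t_0[$, Proposition~\ref{prop:eigenlift} identifies the eigenfunctions constant along the fibers with lifts of base eigenfunctions, so \eqref{eq:morseidx} reduces to
\begin{equation*}
N(g_t)=\mathop{\sum_{\eta\in\spec(\Delta_B)}}_{0<\eta<\Lambda(t)}\mathrm{mul}(\eta),
\end{equation*}
which is finite, strictly increasing through each crossing $t_q$, and tends to $+\infty$ as $\Lambda(t)$ sweeps the unbounded base spectrum. Such crossings $t_q$ therefore occur for arbitrarily small $t$, all lying inside $\,]0,t_0[$. Around each $t_q$ I would pick non-degeneracy values $a_q<t_q<b_q$ (degeneracy values being discrete by \eqref{eq:lambdakj} and \eqref{eq:scal}) with $N(g_{a_q})\ne N(g_{b_q})$, and apply Proposition~\ref{prop:bifmorseindex} to obtain a bifurcation value in $\,]a_q,b_q[$; these accumulate at $0$. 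The main obstacle throughout is the compensation phenomenon, and the whole role of the curvature conditions is that the Lichnerowicz gap on the fibers rules it out, so that the elementary Morse-index criterion suffices in place of the equivariant criterion required in the homogeneous setting.
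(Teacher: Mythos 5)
Your proof is correct and follows essentially the same route as the paper: Lichnerowicz on the fibers gives $\phi_1\ge l\,k_1$, the hypothesis $\scal_F<l(m-1)\,k_1$ then yields the strict gap that pushes every nonconstant eigenvalue $\lambda^{k,j}(t)$ above the threshold $\scal(g_t)/(m-1)$ for small $t$, so no compensation occurs and the Morse-index criterion (Proposition~\ref{prop:bifmorseindex}) applies at the crossings of base eigenvalues. The only cosmetic difference is that the paper uses the second hypothesis pair to bound $\scal_B\circ\pi\le(m-1)\mu_1$ and compares the threshold exactly with $\lambda^{1,1}(t)=\mu_1+\left(\tfrac{1}{t^2}-1\right)\phi_1$, whereas you drop $\mu_k$ to $0$ and absorb the bounded base terms into the small-$t$ asymptotics.
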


\begin{proof}
Since $\pi\colon M\to F$ is a Riemannian submersion with totally geodesic fibers,
\begin{equation}\label{eq:scalt}
\scal(M,g_t)=\tfrac{1}{t^2}\scal_F+\scal_B\circ\pi-t^2\|A\|^2,
\end{equation}
where $\|A\|$ is the Hilbert-Schmidt norm of the Gray-O'Neill tensor $A$. From $\scal_F>0$, we have $\lim_{t\to0_{+}}\scal(M,g_t)=+\infty$. As before, the set of degeneracy values is discrete; and infinitely many of them occur due to $\spec(\Delta_B)\subset\spec(\Delta_t)$, see \eqref{eq:specincl2}. Denote by $t_q$ the sequence of degeneracy values, accumulating at $0$, such that $\scal(M,g_{t_q})/(m-1)\in\spec(\Delta_B)$. We claim that for $q$ sufficiently large (i.e., $t_q$ sufficiently small), $t_q$ is a bifurcation value.

From Proposition~\ref{prop:bifmorseindex}, we must verify that, for $t_q$ sufficiently small, there is a change of the Morse index $N(g_t)$ at $t_q$. It suffices to prove that every \emph{non-constant} eigenvalue $\lambda^{k,j}(t)$ of $\Delta_t$ is strictly larger than $\scal(M,g_t)/(m-1)$ for $t$ sufficiently small, so that no compensation of eigenvalues can occur (cf. proof of Theorem~\ref{thm:bifhom}). Up to a simple rescaling, assume $\tau=1$. Since the eigenvalues $\mu_k$ of $\Delta_M$ and $\phi_j$ of $\Delta_v$ are ordered to be monotonically increasing, it suffices to prove
\begin{equation}\label{eq:goal}
\scal(M,g_t)/(m-1)< \lambda^{1,1}(t)=\mu_1+\left(\tfrac{1}{t^2}-1\right)\phi_1, \quad\mbox{for } t \mbox{ sufficiently small},
\end{equation}
see Corollary~\ref{cor:specincl}. From the Lichnerowicz estimates, since $\Ric_F\geq(l-1)\,k_1$ and $\Ric_M\geq(m-1)\,k_2$, we have:
\begin{equation}\label{eq:lichnerowicz}
\phi_1\geq l\,k_1 \quad \mbox{ and }\quad \mu_1\geq m\,k_2,
\end{equation}
see \cite[Chap 3, Thm 9]{chavel}. Combining the latter with $\scal_B\leq m(m-1)\,k_2$, we get
\begin{equation}\label{eq:1}
\scal_B\circ\pi-t^2\|A\|^2\leq \scal_B\circ\pi \leq (m-1)\mu_1.
\end{equation}
Also, from \eqref{eq:lichnerowicz} and $\scal_F<l(m-1)\, k_1$, it follows that $\scal_F<(m-1)\phi_1$. Thus, for $t$ sufficiently small, we have $\scal_F<(1-t^2)(m-1)\phi_1$, hence
\begin{equation}\label{eq:2}
\tfrac{1}{t^2}\scal_F<(m-1)\left(\tfrac{1}{t^2}-1\right)\phi_1.
\end{equation}
Adding \eqref{eq:1} and \eqref{eq:2} and using \eqref{eq:scalt}, we obtain \eqref{eq:goal}, concluding the proof.
\end{proof}

The above result can be applied, e.g., to low dimensional Hopf fibrations, reobtaining the conclusion of Theorem~\ref{thm:bifhom}. Nevertheless, for larger dimensions, the curvature pinching conditions are not satisfied, although the result remains true.

\begin{remark}\label{rem:improvements}
The curvature pinching conditions above are solely needed to avoid compensation of eigenvalues, in a rather forceful way. Given a Riemannian submersion $F\to M\to B$ with totally geodesic fibers of positive scalar curvature, suppose the metrics $g_t$ obtained by shrinking the fibers have constant scalar curvature. If under certain conditions there is an inclusion of (a non-trivial subgroup of) the isometry group of $F$ in the isometry group of $(M,g_t)$, $t\in\,]0,\tau[$, then one can employ the equivariant techniques to deal with possible compensation of eigenvalues and still obtain infinitely many bifurcation values in this non-homogeneous context.
\end{remark}


\section{Multiplicity of solutions to the Yamabe problem}\label{sec:final}

We now explain how to obtain the multiplicity result claimed in the Introduction.

\begin{proposition}\label{prop:mult}
Let $g_t$, $t\in\,]0,\tau[$, be a family of metrics on $M$ with constant scalar curvature and $N(g_t)>0$. Suppose there exists a sequence $\{t_q\}$ in $]0,\tau[$, that converges to $0$, of bifurcation values for $g_t$. Then, there is an infinite subset $\mathcal T\subset\,]0,\tau[$, that accumulates at $0$, such that for each $t\in\mathcal T$, there are at least $3$ solutions to the Yamabe problem in the conformal class $[g_t]$.
\end{proposition}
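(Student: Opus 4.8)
The plan is to exhibit, for each $t$ in a suitable subset $\mathcal T$, three pairwise distinct constant scalar curvature metrics in $[g_t]_1$: the given metric $\hat g_t$, the Yamabe minimizer $Y_t$, and a metric produced by a bifurcating branch. First I would record the two ``cheap'' solutions that exist for \emph{every} $t\in\,]0,\tau[$. Since the Yamabe problem is solvable, $[g_t]_1$ contains a unit volume minimizer $Y_t$ of $\mathcal A|_{[g_t]_1}$; being a global minimum, its second variation \eqref{eq:jacobiop} is nonnegative, so $N(Y_t)=0$. As the hypothesis $N(g_t)>0$ says that $\hat g_t$ is \emph{not} a local minimum of $\mathcal A|_{[g_t]_1}$, we get $\hat g_t\neq Y_t$. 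Thus every conformal class in the family already carries the two distinct solutions $\hat g_t$ and $Y_t$.

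Next I would extract the third solution from the definition of bifurcation value. Fix a bifurcation value $t_*:=t_q$; by definition there are sequences $s_n\to t_*$ and metrics $h_n\to g_{t_*}$ with $h_n\in[g_{s_n}]$, $h_n\neq g_{s_n}$, $\Vol(h_n)=\Vol(g_{s_n})$ and $\scal(h_n)$ constant. Rescaling $h_n$ and $g_{s_n}$ to unit volume by the \emph{same} factor (condition (ii)) yields distinct unit volume constant scalar curvature metrics $\hat h_n\neq\hat g_{s_n}$ in $[g_{s_n}]_1$, and $h_n\to g_{t_*}$ gives $\hat h_n\to\hat g_{t_*}$. Since $N(g_{s_n})>0$ again forces $\hat g_{s_n}\neq Y_{s_n}$, it only remains to separate $\hat h_n$ from the minimizer $Y_{s_n}$.

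The main obstacle is precisely this last separation, which I would settle at the level of energies. Let $\mathcal Y([g]):=\min_{[g]_1}\mathcal A$ denote the Yamabe constant of a conformal class; it is attained by $Y_t$ and depends continuously on the metric. The decisive point is that $g_{t_*}$ is not a minimizer: because $N(g_{t_*})>0$, the critical point $\hat g_{t_*}$ is not a local, hence not a global, minimum of $\mathcal A|_{[g_{t_*}]_1}$, so $\mathcal A(\hat g_{t_*})>\mathcal Y([g_{t_*}])$ \emph{strictly}. Continuity of $\mathcal A$ then gives $\mathcal A(\hat h_n)\to\mathcal A(\hat g_{t_*})$, while continuity of the Yamabe constant gives $\mathcal A(Y_{s_n})=\mathcal Y([g_{s_n}])\to\mathcal Y([g_{t_*}])<\mathcal A(\hat g_{t_*})$. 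Therefore $\mathcal A(\hat h_n)>\mathcal A(Y_{s_n})$ for all large $n$, whence $\hat h_n\neq Y_{s_n}$. For such $n$ the three solutions $\hat g_{s_n}$, $\hat h_n$, $Y_{s_n}$ of $[g_{s_n}]_1$ are pairwise distinct.

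Finally I would assemble $\mathcal T$. For each bifurcation value $t_q$ the previous paragraph supplies values $s$ arbitrarily close to $t_q$ whose conformal class carries at least three solutions; choose one such $s_q\in\,]0,\tau[$ with $|s_q-t_q|<1/q$. Since $t_q\to 0$, the set $\mathcal T:=\{s_q:q\in\N\}$ is infinite and accumulates at $0$, and by construction $[g_t]$ admits at least three solutions to the Yamabe problem for every $t\in\mathcal T$. The only substantive inputs are the solvability of the Yamabe problem, the index-zero characterization of minimizers via \eqref{eq:jacobiop}, and the continuity of the Yamabe constant; everything else is bookkeeping.
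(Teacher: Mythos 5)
Your proof is correct and follows the same overall strategy as the paper: the three solutions in $[g_{s_n}]_1$ are the normalized metric $\hat g_{s_n}$, a metric on the bifurcating branch, and the Yamabe minimizer, and pairwise distinctness is obtained by showing that the first two are not minimizers. The one place where you diverge is in separating the bifurcating metric from the minimizer. The paper invokes (lower semi)continuity of the Morse index: a constant scalar curvature metric $g$ close to $\hat g_{t_q}$ inherits $N(g)>0$ from $N(\hat g_{t_q})>0$, hence is not a minimum. You instead run an energy comparison, using the strict gap $\mathcal A(\hat g_{t_q})>\mathcal Y([g_{t_q}])$ together with continuity of $\mathcal A$ along the branch and continuity of the Yamabe invariant $g\mapsto\mathcal Y([g])$. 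Both arguments are sound. The paper's is slightly more self-contained, since lower semicontinuity of the index follows directly from continuous dependence of the spectrum of the Jacobi operator \eqref{eq:jacobi} on the metric, whereas yours imports the (standard, but unproved here) continuity of the Yamabe constant. In exchange, your version yields the mildly stronger quantitative conclusion $\mathcal A(\hat h_n)>\mathcal A(Y_{s_n})$ for large $n$, i.e., the bifurcating solutions have strictly larger energy than the minimizers.
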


\begin{proof}
For all $t$, denote by $\hat g_t$ the unit volume metric homothetic to $g_t$. Since $t_q$ is a bifurcation value, there are values of $t$ arbitrarily close to $t_q$ for which the conformal class $[g_t]$ contains a unit volume constant scalar curvature metric $g$ distinct from $\hat g_t$. Since $N(g_t)>0$, by continuity of the Morse index, also $N(g)>0$. In particular, neither $\hat g_t$ nor $g$ are minima of the Hilbert-Einstein functional in $[g_t]$. Therefore, $[g_t]$ contains at least 3 distinct unit volume constant scalar curvature metrics, i.e., 3 solutions to the Yamabe problem. The set $\mathcal T$ of such $t$'s clearly accumulate at $0$, since $t_q$ converges to $0$.
\end{proof}

Theorems \ref{thm:main} and \ref{thm:curv} now follow easily from Theorems~\ref{thm:bifhom} and \ref{thm:bifcurv}, respectively. Indeed, in order to apply Proposition~\ref{prop:mult}, it is just necessary to verify that $N(g_t)>0$. In the first case, if $g_t$ comes from a homogeneous fibration, there must be a value $\tau$ of $t$, when $\scal(t)/(m-1)$ crosses the first eigenvalue from the base, before any compensation is even possible. At this value $t=\tau$, the Morse index changes from $0$ to a positive integer. Then, for $t\in\,]0,\tau[$, we have $N(g_t)\geq N(g_{\tau-\varepsilon})>0$. In the second case, $N(g_t)$ gets arbitrarily large as $t\to 0$, so this condition is also satisfied.


\end{document}